\crefname{hypothesis}{Hypothesis}{Hypotheses}
\crefname{fact}{Fact}{Facts}
\title{A Unique Inverse Decomposition of Positive Definite Matrices under Linear Constraints\thanks{Submitted to SIMAX.
\funding{This work was funded by the ISF 305/25 and ISF 2392/22}}}
\author{Yan Dolinsky\thanks{Department of Statistics and Data Science, The Hebrew University, Jerusalem, Israel 
  (\email{Yan.Dolinsky@mail.huji.ac.il}, \email{or.zuk@mail.huji.ac.il}).}
\and Or Zuk\footnotemark[2]}
\DeclareMathOperator{\diag}{diag}
\newcommand{\Sub}{{\mathcal{S}}}   
\newcommand{\Orth}{{\mathcal{O}}}  
\newcommand{\band}{{\mathcal{D}}}  
\newcommand{\cov}{{\Sigma}} 
\newcommand{\pre}{{\Lambda}} 
\newcommand{\calU}{\mathcal U}
\newcommand{\calL}{\mathcal L}
\newcommand{\calG}{\mathcal G}   
\newcommand{\calS}{{\mathcal S}_{2n}}  
\newcommand{\calM}{\mathcal M}
\DeclareMathOperator{\proj}{proj}
\DeclareMathOperator*{\argmin}{arg\,min}
\DeclareMathOperator{\tr}{tr}
\algrenewcommand\algorithmicrequire{\textbf{Input:}}
\algrenewcommand\algorithmicensure{\textbf{Output:}}
\algnewcommand{\Parameters}{%
	\Statex\noindent\hspace*{-1.7em}\textbf{Parameters:}\hspace*{0.5em}%
}
\newcommand{\be}{\begin{equation}}
\newcommand{\ee}{\end{equation}}
\newcommand{\R}{\mathbb{R}}
\newcommand{\Sym}{\mathbb{S}^n} 
\newcommand{\SPD}{\mathbb{S}^n_{++}} 
\newcommand{\SPSD}{\mathbb{S}^n_{+}} 
\begin{document}

\maketitle

\begin{abstract}
We study a nonlinear decomposition of a positive definite matrix into two components: 
the inverse of another positive definite matrix and a symmetric matrix constrained to lie 
in a prescribed linear subspace. Equivalently, the matrix being inverted is required to lie 
in the orthogonal complement of that subspace with respect to the trace inner product. 
Under a sharp nondegeneracy condition on the subspace, we show that every positive definite 
matrix admits a \emph{unique} decomposition of this form.

This decomposition admits a variational characterization as the unique minimizer of a 
strictly convex log-determinant optimization problem, which in turn yields a natural dual 
formulation that can be efficiently exploited computationally. We derive several properties, 
including the stability of the decomposition.

We further develop feasibility-preserving Newton-type algorithms with provable convergence 
guarantees and analyze their per-iteration complexity in terms of algebraic properties of 
the decomposed matrix and the underlying subspace. Finally, we present two applications: one in mathematical finance, where the decomposition arises naturally in exponential utility maximization, and one in linear algebra, where our main theorem characterizes a new bilinear triangular decomposition for positive definite matrices, obtained together with a direct elimination algorithm.
\end{abstract}

\begin{keywords}
positive definite matrices, inverse-based matrix decomposition, bilinear triangular decomposition, log-determinant optimization, Newton-CG, exponential utility maximization
\end{keywords}

\begin{MSCcodes}
15A39, 15B48, 90C25, 91G10
\end{MSCcodes}

\section{Introduction}
\label{sec:introduction}

We study a nonlinear decomposition of a symmetric positive definite (SPD) matrix $A \in \SPD$ of the form $A = (B^{*})^{-1} + C^{*}$, where the components are constrained such that $C^{*} \in \Sub$ and $B^{*} \in \Sub^{\perp} \cap \SPD$ for a prescribed linear subspace $\Sub \subset \Sym$. Our main contribution is a global existence and uniqueness theorem for this decomposition, valid for any subspace $\Sub$ satisfying the sharp nondegeneracy condition $\Sub \cap \SPSD = \{0\}$. By intertwining matrix inversion with these dual linear constraints, the result provides a canonical algebraic decomposition that serves as a structured, non-Euclidean analogue to the classical orthogonal decomposition of symmetric matrices.

Our proof proceeds by characterizing the decomposition as the unique stationary point of a strictly convex log-determinant functional. Log-determinant objectives are classical and arise throughout convex analysis, matrix theory, information geometry, and statistics (see, e.g., \cite{boyd2004convex, cover2006elements, amari2016information, vandenberghe1998determinant}); nonlinear projections under Bregman divergences were studied in \cite{dhillon2008matrix}; and closest to our setting, Gaussian maximum-likelihood estimation under linear constraints on the precision matrix, from classical covariance selection \cite{dempster1972covariance} to models with general linear constraints on the concentration matrix \cite{sturmfels2009multivariate,uhler2012geometry,zwiernik2014linear}, has an objective coinciding with the dual problem of Section~\ref{subsec:duality_general}. Here, however, the optimization problem serves as a \emph{tool} rather than the primary object of study. The resulting theorem is an exact algebraic statement about matrix structure: for every admissible subspace $\Sub$, the decomposition exists globally, is unique, and depends smoothly on $A$. The feasibility condition on $\Sub$ is shown to be both natural and necessary.

Beyond existence and uniqueness, we establish several structural properties.
The decomposition inherits any symmetry shared by $A$ and $\Sub$ under a finite
orthogonal group, which reduces both the effective dimension and the cost of
computing it. We also derive determinant inequalities, relate the decomposition
of $A$ to that of $A^{-1}$, and prove that the decomposition is stable under
perturbations of $A$.

From an algorithmic perspective, the variational characterization leads
naturally to Newton-type methods with feasibility-preserving line search: we
give explicit gradients and Hessians, prove global convergence with local
quadratic rates under the standard inexact-Newton forcing conditions, and
establish per-iteration complexity bounds. Which method is preferable, and what
each iteration costs, is governed by the algebraic structure of the instance,
and we analyze this in several settings: banded and circulant matrices, where
the structure is preserved along the optimization path; the dual formulation,
which is advantageous when $\dim(\Sub^{\perp})$ is small; and group-invariant
problems, where symmetry reduces the effective dimension
(see, e.g., \cite{gray2006toeplitz,bhatia2013matrix}).

Finally, we illustrate the theory with two applications.
First, we show that the decomposition arises naturally in a Gaussian
exponential-utility maximization problem in mathematical finance,
where $B^{*}$ and $C^{*}$ admit clear economic interpretations
related to information constraints and hedging structure.
Second, we establish a new
result in linear algebra: for a pair of positive definite matrices
$A$ and $\pre$ we obtain the bilinear triangular decomposition
$\pre = L + U + UAL$, with $U$ strictly upper triangular and $L$ lower
triangular, and give
necessary and sufficient conditions for its existence and uniqueness.
We also develop a direct elimination algorithm,
analogous in spirit to LU decomposition, together with its complexity
analysis. This $U$ is
exactly the component our main theorem produces from an associated
$2n\times2n$ matrix, for every positive definite pair $(A,\pre)$. The
identification therefore holds even when the triangular decomposition does not
exist, and this is what yields the criterion for when it does.

\smallskip
\noindent
\textbf{Outline.}
Section~\ref{sec:general_decomposition} presents the main decomposition theorem and its proof. In this section we also derive several useful properties of the decomposition. Section~\ref{sec:optimization} develops algorithms together with complexity guarantees. Section~\ref{sec:finance} presents an application to a utility maximization problem in finance. Section~\ref{sec:linear_algebra} applies the decomposition to derive a new bilinear triangular decomposition for positive definite matrices and develops an associated elimination algorithm for a structured matrix class. Finally, Section~\ref{sec:conclusion} concludes the paper and discusses possible directions for future research.

\section{Decomposition Theorem}
\label{sec:decomposition_theorem}
\label{sec:general_decomposition}

Let $\Sym$ denote the space of symmetric real $n \times n$ matrices, equipped
with the Frobenius inner product $\langle X,Y\rangle_F:=\tr(XY)$ and its induced
norm $\|\cdot\|_F$. Let $\SPD \subset \Sym$ be the (open) cone of
positive-definite matrices, and let $\SPSD \subset \Sym$ be the (closed) cone of
positive semidefinite matrices. A linear subspace $\Sub\subset\Sym$ has
orthogonal complement
$\Sub^\perp := \{\,X\in\Sym:\ \tr(XY)=0\ \ \forall\,Y\in \Sub\,\}$.
We arrive at the main result of the paper.

\begin{theorem}[General structured decomposition]
\label{thm:general_decomposition}
Let $\Sub\subset\Sym$ be a linear subspace.
\begin{enumerate}
\item[\textbf{(i)}] If $\Sub$ contains no nonzero positive semidefinite matrix,
that is,
\be
\Sub \cap \SPSD = \{0\},
\label{eq:nopsddirections}
\ee
then every $A\in\SPD$ admits a \emph{unique} pair
$(B^\ast \in \SPD \cap \Sub^{\perp}, C^\ast \in \Sub)$ such that
\be\label{eq:maindecomposition}
A = {B^\ast}^{-1} + C^\ast.
\ee
\item[\textbf{(ii)}] Conversely, if \eqref{eq:maindecomposition} admits a
solution for even a single $A\in\SPD$, then \eqref{eq:nopsddirections} holds.
\end{enumerate}
\end{theorem}

Lemma~\ref{lem:bounded_intersection} below shows that the set of matrices $C$
yielding a positive definite $A-C$ is bounded. The proof of
Theorem~\ref{thm:general_decomposition} then frames the decomposition as the
minimization of the log-determinant objective \(C\mapsto-\log\det(A-C)\), whose
stationarity condition is exactly the trace condition on $B$.

\begin{lemma}
\label{lem:bounded_intersection}
Let $A \in \mathbb{S}_{++}^n$ and let $\Sub \subset \mathbb{S}^n$ be a linear subspace satisfying \eqref{eq:nopsddirections}. 
Define the feasible set
\be
\Omega_C := \{\, C \in \Sub : A - C \succ 0 \,\}.
\label{eq:feasible_matrices}
\ee
Then $\Omega_C$ is a bounded subset of $\Sub$. In particular, its closure $\overline{\Omega_C}$ is compact.
\end{lemma}

\begin{proof}
Assume by contradiction that $\Omega_C$ is unbounded. Then there exists a
sequence $\{C^{(k)}\}_{k=1}^{\infty} \subset \Omega_C$ with $\|C^{(k)}\|_F \underset{k \to \infty}{\longrightarrow} \infty$, where $|| \cdot ||_F$ is the Frobenius norm. Define the normalized matrices
\be
X^{(k)} := \frac{C^{(k)}}{\|C^{(k)}\|_F} \in \Sub.
\ee
Since $\|X^{(k)}\|_F = 1$, the sequence $\{X^{(k)}\}_{k=1}^{\infty}$ lies in a compact set, and therefore it has a converging subsequence $\{X^{(k_j)}\}_{j=1}^{\infty}$ with 
$X^{(k_j)} \underset{j \to \infty}{\longrightarrow} X$ for some $X \in \Sub$ with $\|X\|_F = 1$.
Take the corresponding subsequence $\{C^{(k_j)}\}_{j=1}^{\infty}$ of $\{C^{(k)}\}_{k=1}^{\infty}$. The condition $A - C^{(k_j)} \succ 0$ yields for every fixed nonzero vector $v \in \mathbb{R}^n$,
\be
\frac{v^{\top} C^{(k_j)} v}{\|C^{(k_j)}\|_F}
< 
\frac{v^{\top} A v}{\|C^{(k_j)}\|_F} \underset{j \to \infty}{\longrightarrow} 0,
\label{eq:quad_form_condition}
\ee

where the limit on the right-hand side follows from the assumption that
$\|C^{(k)}\|_F \underset{k \to \infty}{\longrightarrow} \infty$. But continuity of quadratic forms yields
\be
\frac{v^{\top} C^{(k_j)} v}{\|C^{(k_j)}\|_F}
\; \underset{j \to \infty}{\longrightarrow} \;
v^{\top} X v .
\ee
Hence $v^{\top} X v \le 0$ for all $v \in \mathbb{R}^n$. Thus, $X \preceq 0$. But $X\neq 0$ and $X \in \Sub$, and so $-X \in \Sub$ satisfies $-X \succeq 0$, in 
contradiction to $\Sub \cap \SPSD = \{0\}$.
Therefore $\Omega_C$ must be bounded.
\end{proof} 

\begin{proof}
\label{proof:main_decomposition}

The proof is divided into three steps. Steps 1 and 2 prove part~\textbf{(i)},
so throughout them we assume \eqref{eq:nopsddirections} and fix $A\in\SPD$.
Step~3 proves part~\textbf{(ii)}.

\smallskip
\emph{Step 1: Reduction of the decomposition to a stationarity condition.}
Let $D_1,...,D_m\in \Sub$ be a basis for $\Sub$.
Define the linear bijection
\be
C(x) := \sum_{k=1}^m x_k D_k , \qquad x \in \mathbb{R}^m,
\ee
and the corresponding feasible parameter set
\be
\Omega_x := \{\, x \in \mathbb{R}^m : A - C(x) \succ 0 \,\}.
\label{eq:x_feasible_set}
\ee
Hence the feasible set of matrices $\Omega_C$ defined in eq. \eqref{eq:feasible_matrices} satisfies
$\Omega_C = C(\Omega_x)$.

For $x \in \Omega_x$ let
\be
M(x) := A - C(x),
\qquad
B(x) := M(x)^{-1},
\qquad
\Phi(x) := -\log\det\bigl(M(x)\bigr).
\label{eq:phi_x}
\ee
Using the chain rule for $M$ and $-\log\det(\cdot)$ we obtain
\be
\label{eq:grad_phi}
\frac{\partial \Phi}{\partial x_k}(x)
= \tr \big( B(x) D_k \big), 
\qquad k = 1,\dots,m.
\ee

Thus a pair $(B,C)\in \SPD \times \Sub$ with $A=B^{-1}+C$ and $\tr(BD_k)=0$ for all $k$ corresponds 
precisely to a parameter $x$ such that $C=C(x)$ and 
\be
\nabla\Phi(x)=0.
\ee

\emph{Step 2: Existence and uniqueness of a minimizer.}
The map $M(x)$ is affine and injective, and $-\log\det(\cdot)$ is strictly convex on 
$\SPD$, hence $\Phi$ is strictly convex on the convex set $\Omega_x$.
If a sequence $\{x^{(k)}\}_{k=1}^{\infty}$ approaches $\partial\Omega_x$, then
along a subsequence $x^{(k)}\to\bar x\in\partial\Omega_x$, so that
$M(x^{(k)})\to M(\bar x)\succeq0$ with $M(\bar x)$ singular; hence
$\det\bigl(M(x^{(k)})\bigr) \to 0$ and $\Phi(x^{(k)}) \to +\infty$ as $k \to \infty$.

Since $A \succ 0$ and $C(0)=0$, we have $0 \in \Omega_x$ and $\Omega_x$ is nonempty.
By Lemma~\ref{lem:bounded_intersection}, the set $\Omega_C$ is bounded. 
Thus $\Omega_x = C^{-1}(\Omega_C)$ is bounded as well, and $\overline{\Omega_x}$ 
is compact.

Extend $\Phi$ to $\overline{\Omega_x}$ by setting $\Phi = +\infty$ on 
$\partial\Omega_x$.
Lower semicontinuity of $\Phi$ and compactness of $\overline{\Omega_x}$ yield 
a minimizer $x^\ast \in \Omega_x$, and strict convexity implies that 
$x^\ast$ is unique. Since $\Phi$ is differentiable and strictly convex on the open set $\Omega_x$, any stationary point is the global minimizer. Thus the unique 
minimizer $x^\ast$ is also the unique point in $\Omega_x$ satisfying 
$\nabla\Phi(x^\ast)=0$.

Define $C^\ast := C(x^\ast)$ and $B^\ast := (A - C^\ast)^{-1}$.
Then the pair $(B^\ast,C^\ast) \in \SPD \times \Sub$ satisfies \eqref{eq:maindecomposition}.
Uniqueness of $x^\ast$ and the equivalence in Step~1 imply the uniqueness of the decomposition
\eqref{eq:maindecomposition}. This proves part~\textbf{(i)}.

\smallskip
\emph{Step 3: Necessity of \eqref{eq:nopsddirections}.}
For part~\textbf{(ii)}, suppose \eqref{eq:maindecomposition} holds for some
$A\in\SPD$, with $B^\ast\in\SPD\cap \Sub^\perp$ and $C^\ast\in \Sub$, and let
$0\neq X\in \Sub\cap\SPSD$. Then $B^\ast\in \Sub^\perp$ gives $\tr(B^\ast X)=0$,
whereas $B^\ast\succ0$ together with $X\succeq0$, $X\neq0$ gives
$\tr(B^\ast X)\ge\lambda_{\min}(B^\ast)\tr(X)>0$, a contradiction. Hence
$\Sub\cap\SPSD=\{0\}$.
\end{proof}

\begin{remark}\label{remark1}
A sufficient condition for \eqref{eq:nopsddirections} is the existence of a
matrix $\hat B \in \SPD$ such that
\[
\tr(\hat B X) = 0, \ \forall X\in \Sub.
\]

Part~\textbf{(ii)} of Theorem~\ref{thm:general_decomposition} has a variational counterpart, which explains how the construction of Step~2 fails without \eqref{eq:nopsddirections}: the condition is also necessary for a minimizer of $-\log\det(A-C)$ over $\Omega_C$ to exist. Suppose there exists a matrix $X \neq 0$ satisfying $X \in \SPSD \cap \Sub$. Define $C_t = -tX$. Then $A-C_t = A + tX \succ 0$ for all $t \geq 0$, hence the feasible set $\Omega_C$ is unbounded. Therefore, $\det(A-C_t) \underset{t \to \infty}{\longrightarrow} \infty$ hence the objective $-\log\det(A-C_t)$ is unbounded below.
\end{remark}

\subsection{A dual variational formulation}
\label{subsec:duality_general}

The decomposition of Theorem~\ref{thm:general_decomposition} admits a natural
primal-dual interpretation. This viewpoint clarifies the variational structure
of the problem and suggests alternative algorithmic approaches. The dual
formulation below coincides formally with the Gaussian maximum-likelihood
objective under linear constraints on the precision matrix, but is used here in
the context of a deterministic decomposition.

\paragraph{Primal formulation}
By Step~2 of the proof of Theorem~\ref{thm:general_decomposition}, the matrix
$C^\ast$ is the unique minimizer of the strictly convex problem
\be
\min_{C\in\Omega_C} \; -\log\det(A-C) .
\label{eq:primal_logdet}
\ee

\paragraph{Dual formulation}
The primal problem \eqref{eq:primal_logdet} can be written as minimizing $-\log\det(M)$ over $M\succ0$ subject to the linear constraint $A-M\in \Sub$, where $M=A-C$ is an auxiliary variable.
The associated Lagrangian involves a dual variable $B\in \Sub^\perp$.
Eliminating $M$ via the first-order optimality condition $M^{-1}=B$ yields the
dual problem stated below:

Let $(B^\ast \in \SPD \cap \Sub^{\perp},\, C^\ast \in \Sub)$ be the unique pair satisfying
\eqref{eq:maindecomposition}. For any $B \in \Sub^{\perp}$ and $C \in \Sub$ we have $\tr(B C) = 0$ hence $\tr \big( (A-C)B \big)=\tr(BA)$. 
The function
\[
\Psi_A(B):= \log\det(B)-\tr(BA)+n, \qquad B\in \SPD \cap \Sub^{\perp},
\]
is strictly concave and satisfies the first-order optimality condition
\[
\nabla\Psi_A(B^\ast) = {B^\ast}^{-1}-A \quad ; \quad \tr\!\big(( {B^\ast}^{-1}-A )X\big)=0 \:,\: \forall X\in \Sub^\perp .
\]
Moreover, using the decomposition $A={B^\ast}^{-1}+C^\ast$ with $C^\ast\in \Sub$, we have for every $X\in \Sub^\perp$, 
\[
\tr\big( ({B^\ast}^{-1}-A) X \big) = -\tr(C^\ast X)=0 . 
\]

Hence $B^\ast$ satisfies the first-order optimality conditions for $\Psi_A$.
Since $\log\det(B)$ is strictly concave on $\SPD \cap \Sub^\perp$, we conclude that $B^\ast$ is the unique maximizer of the dual problem
\be
\label{eq:dual_logdet}
\max_{B\succ 0,\; B\in \Sub^{\perp}} \log\det(B)-\tr(BA)+n.
\ee

Since the primal problem \eqref{eq:primal_logdet} is convex and strictly feasible,
strong duality holds, and the primal and dual optimal values coincide. Accordingly, the dual problem \eqref{eq:dual_logdet} provides an alternative computational
route to the decomposition.
One may optimize directly over $B \in \Sub^\perp \cap \SPD$ using standard iterative methods, including Newton-type schemes with feasibility-preserving backtracking.
The primal formulation is preferable when $m=\dim \Sub$ is small, while the dual
formulation may be preferable when $\dim(\Sub^\perp)$ is comparatively small or when the
constraint structure is naturally expressed in terms of $B$.

\subsection{Properties of the decomposition}\label{sec:properties}

In this subsection we collect several properties of the
variational characterization of the decomposition
\eqref{eq:maindecomposition}.
\begin{enumerate}

\item \textbf{Action by conjugation.}
Let $\Orth(n)$ denote the set of all $n \times n$ orthogonal matrices.
For $P \in \Orth(n)$, consider the action on $\Sym$ by conjugation,
$X \mapsto P X P^\top$ ($P^\top$ denotes transposition). 
From the equality
\[
\tr\!\left(P B^\ast P^\top \, P X P^\top\right)
= \tr(B^\ast X) = 0 \:,
\qquad \forall X \in \Sub,
\]
we conclude that
\[
P A P^\top = (P B^\ast P^\top)^{-1} + P C^\ast P^\top
\]
is the unique decomposition \eqref{eq:maindecomposition} of $P A P^\top$
associated with the subspace
$
\tilde \Sub := P \Sub P^\top .
$

Next, we describe the following inheritance of group symmetry.
Let $\calG \subset \Orth(n)$ act on $\Sym$ by conjugation,
$
X \mapsto P X P^\top .
$
Assume that $A$ is $\calG$-fixed, i.e.,
$
PA P^\top = A \quad \forall P \in \calG,
$
and that $\Sub$ is $\calG$-invariant, i.e.,
$
P \Sub P^\top = \Sub \quad \forall P \in \calG.
$
By the above analysis and the uniqueness of the decomposition
\eqref{eq:maindecomposition}, the decomposition is then $\calG$-fixed:
\be
\label{eq:G_fixed}
P B^\ast P^\top = B^\ast,
\qquad
P C^\ast P^\top = C^\ast,
\quad \forall P \in \calG.
\ee

\item \textbf{Determinant inequality.}
From the primal representation,
\[
-\log\det(A) = -\log\det(A-0) \;\ge\; -\log\det(A-C^\ast),
\]
where the inequality follows from optimality of $C^\ast$.
Hence
\[
-\log\det(A) \ge -\log\det({B^\ast}^{-1}),
\]
which implies
\be
\label{eq:det_ineq}
\det({B^\ast}^{-1}) \ge \det(A),
\quad\text{i.e.}\quad
\det(B^\ast) \le \det(A^{-1}).
\ee

\item \textbf{Decomposition of the inverse.}
Define the transformed subspace
\[
\hat \Sub := A^{-1} \Sub A^{-1}
= \{\, A^{-1} X A^{-1} : X \in \Sub \,\} \subset \Sym,
\]
and set
\be\label{deco}
\hat B := A B^\ast A,
\qquad
\hat C := A^{-1} C^\ast A^{-1}.
\ee
Then, for all \(X \in \Sub\),
\[
\tr(\hat B\, A^{-1} X A^{-1}) = \tr(B^\ast X) = 0.
\]
Consequently,
\[
\hat B \in \hat \Sub^\perp, \quad
\hat B \succ 0, \quad
\hat C \in \hat \Sub, \quad
A^{-1} = \hat B^{-1} + \hat C.
\]
The subspace \(\hat \Sub\) inherits \eqref{eq:nopsddirections}: the congruence
\(X\mapsto A^{-1}XA^{-1}\) is a linear bijection of \(\Sym\) that preserves
positive semidefiniteness in both directions, so a nonzero element of
\(\hat \Sub\cap\SPSD\) would produce a nonzero element of \(\Sub\cap\SPSD\). By
uniqueness of the decomposition \eqref{eq:maindecomposition},
\((\hat B,\hat C) \in \SPD \times \hat \Sub\)
is the unique decomposition of \(A^{-1}\) associated with \(\hat \Sub\).

\end{enumerate}

\subsection{Stability under perturbations}
\label{subsec:stability}

By Theorem~\ref{thm:general_decomposition} each $A\in\SPD$ determines a unique
pair, which we write $\big(B^{*}(A),C^{*}(A)\big)$. This subsection bounds how
far that pair moves when $A$ is perturbed to $A+\Delta$, in terms of quantities
attached to the unperturbed problem. Assume throughout that $\Sub\neq\{0\}$
(for $\Sub=\{0\}$ the decomposition is $C^{*}(A)=0$, $B^{*}(A)=A^{-1}$). For
$B\in\SPD$ set
\be
\label{eq:alpha_def_fro}
\alpha(B,\Sub):=\min_{X\in \Sub,\ \|X\|_F=1}\tr(BXBX) ,
\ee
where $\tr(BXBX)$ is the second derivative of $C\mapsto-\log\det(A-C)$ in the
direction $X$, at the point $C$ with $B=(A-C)^{-1}$. Thus $\alpha(B,\Sub)$
measures how sharply that objective curves along $\Sub$: when it is small the
minimizer is poorly determined, and the decomposition is correspondingly
sensitive.

\begin{lemma}[Positivity and Lipschitz continuity of $\alpha$]
\label{lem:alpha_lipschitz}
Let $B,B'\in\SPD$ and let $\Sub\subset\Sym$ be a nonzero linear subspace.
\begin{enumerate}
\item[\textbf{(i)}] $0<\lambda_{\min}(B)^2\le\alpha(B,\Sub)\le\|B\|_2^2$.
\item[\textbf{(ii)}] $\big|\alpha(B',\Sub)-\alpha(B,\Sub)\big|\le\big(\|B'\|_2+\|B\|_2\big)\,\|B'-B\|_2$.
\end{enumerate}
\end{lemma}

\begin{proof}
\textbf{(i)} For $X\in\Sym$,
\begin{align*}
\tr(BXBX)&=\|B^{1/2}XB^{1/2}\|_F^2 ,\\
\lambda_{\min}(B)\|X\|_F&\le\|B^{1/2}XB^{1/2}\|_F\le\|B\|_2\|X\|_F .
\end{align*}
Squaring the second chain and minimizing over $X\in \Sub$ with $\|X\|_F=1$, a
compact set, gives the two inequalities, the first strict because
$\lambda_{\min}(B)>0$ for $B\in\SPD$.

\textbf{(ii)} For every $X\in\Sym$,
\begin{align*}
\big|\tr(B'XB'X)-\tr(BXBX)\big|
&=\big|\tr\big((B'-B)XB'X\big)+\tr\big(BX(B'-B)X\big)\big|\\
&\le\|(B'-B)X\|_F\big(\|B'X\|_F+\|BX\|_F\big)\\
&\le\big(\|B'\|_2+\|B\|_2\big)\,\|B'-B\|_2\,\|X\|_F^2 ,
\end{align*}
the second step by the triangle inequality and
$|\tr(MN)|\le\|M\|_F\|N\|_F$, the third by $\|MX\|_F\le\|M\|_2\|X\|_F$. At every
$X\in\Sub$ with $\|X\|_F=1$ the two traces therefore differ by at most
$\big(\|B'\|_2+\|B\|_2\big)\|B'-B\|_2$, hence so do their minima over that set,
namely $\alpha(B',\Sub)$ and $\alpha(B,\Sub)$.
\end{proof}

Part~\textbf{(i)} gives $\alpha(B,\Sub)>0$, so we may write
\be
\label{eq:kappa_def}
\kappa(B,\Sub):=\frac{\|B\|_2}{\sqrt{\alpha(B,\Sub)}}
\ee
for the associated restricted condition number, which satisfies
$1\le\kappa(B,\Sub)\le\lambda_{\max}(B)/\lambda_{\min}(B)$. The next two results
are proved in Appendix~\ref{app:perturbation}, and both rest on the real analyticity of
$A\mapsto\big(B^{*}(A),C^{*}(A)\big)$ on $\SPD$.

\begin{proposition}[Local stability of the decomposition]
\label{prop:perturbation_fro}
Let $A\in\SPD$ with decomposition $(B^{*},C^{*})$ given by
Theorem~\ref{thm:general_decomposition} for a nonzero subspace $\Sub$, and let
$\widetilde A=A+\Delta\in\SPD$ have decomposition $(\widetilde B,\widetilde C)$
for the same $\Sub$. Set
\be
\label{eq:smallness_fro}
\varepsilon:=\|B^{*}\|_2\,\kappa(B^{*},\Sub)^{2}\,\|\Delta\|_F
=\frac{\|B^{*}\|_2^{3}\,\|\Delta\|_F}{\alpha(B^{*},\Sub)} .
\ee
If $\varepsilon<1-\tfrac1{\sqrt2}$, then
\begin{align}
\label{eq:C_stab_fro}
\|\widetilde C-C^{*}\|_F
 &\le \frac{\kappa(B^{*},\Sub)}{\sqrt{1-4\varepsilon+2\varepsilon^{2}}}\,\|\Delta\|_F,\\
\label{eq:B_stab_fro}
\|\widetilde B-B^{*}\|_2 &\le
 \frac{\|B^{*}\|_2^{2}}{1-\|B^{*}\|_2\|\Delta\|_F}\,\|\Delta\|_F.
\end{align}
\end{proposition}

\begin{remark}
The bound~\eqref{eq:B_stab_fro} is free of $\alpha(B^{*},\Sub)$,
whereas~\eqref{eq:C_stab_fro} degrades with the restricted condition number
$\kappa(B^{*},\Sub)$ of~\eqref{eq:kappa_def}. Both inflation factors decrease
to $1$ as $\|\Delta\|_F\to0$, recovering the infinitesimal rates
$\kappa(B^{*},\Sub)$ and $\|B^{*}\|_2^{2}$
of~\eqref{eq:dC_bound} and~\eqref{eq:dB_bound}. Along any sequence with
$\alpha(B^{*},\Sub)\to0$,
Lemma~\ref{lem:alpha_lipschitz}\textbf{(i)} gives $\lambda_{\min}(B^{*})\to0$, hence
$\lambda_{\max}(A-C^{*})=\lambda_{\max}\big((B^{*})^{-1}\big)\to\infty$.
\end{remark}

Integrating the local bound along segments makes it uniform on compact
convex regions.

\begin{corollary}[Uniform stability on compact sets]
\label{cor:perturbation_global}
Let $\Sub$ be nonzero and let $\Theta\subset\SPD$ be nonempty, compact and convex,
and set
\[
\beta_0:=\max_{A\in\Theta}\|B^{*}(A)\|_2,
\qquad
\alpha_0:=\min_{A\in\Theta}\alpha\big(B^{*}(A),\Sub\big).
\]
Then, for all $A,\widetilde A\in\Theta$,
\[
\|C^{*}(\widetilde A)-C^{*}(A)\|_F\le\frac{\beta_0}{\sqrt{\alpha_0}}\,\|\widetilde A-A\|_F,
\qquad
\|B^{*}(\widetilde A)-B^{*}(A)\|_2\le \beta_0^{2}\,\|\widetilde A-A\|_F .
\]
\end{corollary}



\section{Algorithms and Complexity}
\label{sec:algorithms}
\label{sec:optimization}

This section presents numerical methods for computing the unique pair
$(B^\ast,C^\ast)\in\SPD\times \Sub$ from
Theorem~\ref{thm:general_decomposition}.
Writing
$C(x)=\sum_{k=1}^m x_k D_k$
and
$\Phi(x):=-\log\det\bigl(A-C(x)\bigr)$,
the decomposition is obtained by solving the convex optimization problem
\be
x^\ast \;\in\; \argmin_{x\in\Omega_x}\;\Phi(x),
\ee
and then setting $C^\ast=C(x^\ast)$ and $B^\ast=M(x^\ast)^{-1}$.

This formulation yields a smooth, strictly convex optimization problem over an
open convex domain.
Our focus here is not on advocating a particular optimization algorithm, but on
understanding how \emph{algebraic structure} in the data matrix $A$ and in the
constraint subspace $\Sub$ affects the computational complexity of solving this
problem.

Throughout, we use a \emph{Newton-CG} method as the default algorithm.
This choice avoids explicit formation of the Hessian and replaces the Newton
system solve by a matrix-free conjugate-gradient iteration.
Exact Newton methods, based on forming and factorizing the full Hessian, are
used only when $m=\dim \Sub$ is sufficiently small that the associated $m^3$ cost
is negligible.
The Newton-CG method is particularly convenient here because all derivatives
of $\Phi$ admit closed-form expressions in terms of linear solves with
$M(x)=A-C(x)$, allowing the impact of algebraic structure (such as sparsity,
bandedness, Toeplitz structure, or symmetry) to be tracked explicitly.

A detailed demonstration of our implementations on illustrative structural
examples is available in Appendix \ref{subsec:algebraic_examples}.
A reference Python implementation of all algorithms described in this section,
including structure-exploiting variants, is freely available at
\href{https://github.com/orzuk/ConstrainedDecomposition}{\emph{github.com/orzuk/ConstrainedDecomposition}}.

\subsection{Derivatives}
\label{subsec:derivatives}

Recall the parametrization $M(x)=A-C(x)$ and $B(x)=M(x)^{-1}$, and the gradient
elements $\frac{\partial\Phi}{\partial x_k}(x)=\tr\big(B(x)D_k\big)$ given in
\eqref{eq:grad_phi}. Differentiating once more yields the Hessian
\be\label{eq:hess_section2}
\frac{\partial^2\Phi}{\partial x_k\partial x_\ell}(x)
=
\tr\big(B(x)D_kB(x)D_\ell\big),
\qquad k,\ell=1,\dots,m.
\ee

Writing $D(v):=\sum_{\ell=1}^m v_\ell D_\ell$ for $v\in\R^m$, the Hessian
$H(x):=\nabla^2\Phi(x)$ satisfies
\[
v^\top H(x)v=\tr\big(B(x)D(v)B(x)D(v)\big)
=\big\|B(x)^{1/2}D(v)B(x)^{1/2}\big\|_F^2 ,
\]
which is strictly positive whenever $v\neq0$, since the basis matrices $D_k$ are
linearly independent and hence $D(v)\neq0$. Thus $H(x)\succ0$ for all
$x\in\Omega_x$, and $\Phi$ is strictly convex on $\Omega_x$.

All derivatives of $\Phi$ are computed using explicit analytic expressions and
require only applications of $M(x)^{-1}$, implemented through a factorization
or structure-exploiting solver for $M(x)$. No numerical differentiation is used.
In the structured settings the inverse $B(x)$ is never formed, since only the
entries of $M(x)^{-1}$ on the supports of the $D_k$ are needed. In the dense
case it is formed once per outer iteration, as accounted for in
Section~\ref{subsec:complexity_brief}. If
$B^\ast$ is required explicitly it is formed once on exit, at cost $O(n^3)$.
The Newton-CG method requires only Hessian-vector products. With $D(v)$ as above,
\be\label{eq:Hv_formula}
\big(H(x)v\big)_k
=
\tr\big(B(x)D_kB(x)D(v)\big),
\qquad k=1,\dots,m,
\ee
which can be evaluated using the same linear solves with $M(x)$. This matrix-free
access to second-order information is analogous in spirit to reverse-mode
automatic differentiation, but here all derivatives are available in closed
form.

\subsection{Newton-CG with feasibility-preserving backtracking}
\label{subsec:newton_algorithm}

The minimizer of $\Phi$ is computed by a Newton-CG method with
feasibility-preserving Armijo backtracking.

At a current iterate $x\in\Omega_x$, let $g=\nabla\Phi(x)$ and $H=\nabla^2\Phi(x)$.
The Newton direction $d$ is defined implicitly as the solution of
\be
H(x)d=-g(x),
\ee
which is computed using conjugate gradients with Hessian-vector products
$p\mapsto H(x)p$ given by \eqref{eq:Hv_formula}.
Step sizes are chosen by backtracking to ensure $M(x+td)\succ0$ and sufficient decrease of $\Phi$.

Algorithmic details, including evaluation of $\Phi$, its gradient, and
Hessian-vector products, are given in Appendix~\ref{app:newtoncg_pseudocode}.

\paragraph{Convergence}
The objective $\Phi(x)=-\log\det\bigl(A-C(x)\bigr)$ is strictly convex and self-concordant on
$\Omega_x$, as it is the composition of the self-concordant log-determinant barrier with an affine map.
By Theorem~\ref{thm:general_decomposition} and Lemma~\ref{lem:bounded_intersection},
the sublevel set $\{x:\Phi(x)\le\Phi(x_0)\}$ of any feasible $x_0$ is nonempty, compact, and bounded away from $\partial\Omega_x$.
Consequently, Newton-CG with feasibility-preserving Armijo backtracking is
globally convergent from any feasible initialization, provided each CG solve is
terminated by an inexact-Newton forcing condition
\be
\label{eq:forcing}
\|H(x_k)d_k+g(x_k)\|_2\le\eta_k\,\|g(x_k)\|_2,
\qquad \eta_k\le\bar\eta<1 .
\ee
Started from $d=0$, every CG iterate is a descent direction, since
$H(x_k)\succ0$.
The \emph{local} rate is governed by the forcing sequence $\{\eta_k\}$ rather
than by a fixed tolerance: $\eta_k\le\bar\eta<1$ gives a linear rate,
$\eta_k\to0$ a superlinear one, and $\eta_k=O(\|g(x_k)\|_2)$ a quadratic one, in
which case full Newton steps are eventually accepted near the unique minimizer
$x^\ast$. See, e.g.,
\cite{dembo1982inexact,nesterov1994interior,nocedal2006numerical}.

\subsection{Per-iteration cost and algorithmic considerations}
\label{subsec:complexity_brief}

Table~\ref{tab:per_iteration_complexity} reports the per-iteration cost of
Newton-type methods in terms of the ambient dimension $n$, the constraint
dimension $m=\dim \Sub$, and the parameters of whatever algebraic structure is
present. The exact Newton method appears only as a baseline for very small $m$. All entries
are per outer iteration and asymptotic, ignoring constants and lower-order
terms, and some are deliberately loose, being stated without assumptions on how
the basis $\{D_k\}$ is distributed within the admissible sparsity pattern, so
measured runtimes may scale better. The dominant operations are
(i) construction of a solver for $M(x)=A-C(x)$,
(ii) evaluation of the gradient traces $\tr\big(M(x)^{-1}D_k\big)$,
and (iii) computation of a second-order search direction by CG.

The dense bounds assume unstructured basis matrices $D_k$, so each trace evaluation costs $O(n^2)$.
Each conjugate-gradient step forms a Hessian--vector product
$H(x)p$ via~\eqref{eq:Hv_formula}, which requires applying $M(x)^{-1}$ to the
dense combination $D(p)=\sum_\ell p_\ell D_\ell$. In the dense case this is a
$O(n^3)$ solve (with $M(x)$ factored once per outer iteration), in addition
to $O(mn^2)$ for the $m$ trace evaluations; hence a Newton-CG outer
iteration costs $O\big(n^3+N_{\mathrm{cg}}(n^3+mn^2)\big)$. Precomputing the
$m$ matrices $M(x)^{-1}D_k$ instead costs $O(mn^3)$ once, with $O(mn^2)$
storage, and reduces each CG step to $O(mn^2)$. This pays only when the number
of CG steps is large relative to $m$. The structured rows of
Table~\ref{tab:per_iteration_complexity} follow the same accounting, with the
dense solve cost $n^3$ replaced by the corresponding structured solve cost.

In the banded setting, both $A$ and the constraint subspace $\Sub$ are chosen so
that $M(x)$ remains banded along the optimization path, with half-bandwidth $b$,
meaning that entries vanish for $|i-j|>b$. Linear solves and trace evaluations
are then carried out in $O(nb^2)$ and $O(b^2)$ time,
respectively. Although $M(x)^{-1}$ is dense in general, the trace terms involve only entries of $M(x)^{-1}$ within the sparsity pattern of the banded matrices $D_k$, which can be
computed efficiently using sparse inverse subset methods (e.g., Takahashi-type
recursions, see \cite{rue2005gaussian}).
In the banded row, the term $mb^2$ bounds the $m$ trace evaluations when each
$D_k$ is supported on a $b\times b$ block. For the standard banded basis, in
which each $D_k$ has $O(1)$ nonzero entries, this term reduces to $O(m)$.

\begin{remark}[Banded Hessian--vector products]
\label{rem:banded_hessvec}
By \eqref{eq:Hv_formula}, a conjugate-gradient step requires
\begin{align*}
\big(H(x)v\big)_k
&=\tr\big(M(x)^{-1}D_k\,M(x)^{-1}D(v)\big)
=\big\langle D_k,\,G\big\rangle_F,\\
G&:=M(x)^{-1}D(v)\,M(x)^{-1},
\end{align*}
so only the entries of $G$ on the supports of the $D_k$ are needed, that is, the
band of $G$. Unlike the gradient traces, this does not reduce to a single
structured solve: the matrix $M(x)^{-1}$ is dense and occurs \emph{twice} in
$G$, so applying it successively to $D(v)$ produces a dense intermediate and
costs $O(n^2)$ or more per product. The band of $G$ is nevertheless
computable in $O(nb^2)$. Indeed,
\be
\label{eq:G_as_derivative}
G=-\frac{d}{dt}\Big|_{t=0}\big(M(x)+t\,D(v)\big)^{-1},
\ee
so the required entries of $G$ form the directional derivative of the selected
inverse of $M(x)$. Equivalently, $G$ is the off-diagonal block of the inverse of the $2n\times2n$
matrix
\[
\begin{pmatrix} M(x) & -D(v)\\ 0 & M(x)\end{pmatrix},
\]
which is banded once the two copies are interleaved, index $i$ of the first
placed next to index $i$ of the second. In the ordering displayed above the
off-diagonal block instead sits $n$ positions away from the diagonal.
Grouping the indices into $n/b$ consecutive blocks of size $b$ makes $M(x)$
block tridiagonal. The block $LDL^\top$ factorization and the block
Takahashi recursion then each consist of $n/b$ steps of $O(b^3)$ work, and
propagating forward-mode derivatives through both recursions costs the same.
The band of $G$ obtained this way is exact. This is a manifestation of the
quasiseparable structure of the inverse of a banded matrix
\cite{vandebril2008matrix,erisman1975computing}: the mechanism that makes the
gradient traces cheap also makes the second-order terms cheap, and it is what
the $nb^2$ entries of Table~\ref{tab:per_iteration_complexity} rest on.
\end{remark}

In the circulant setting, $A$ and the basis matrices $\{D_k\}$ are circulant and
symmetric, so that $M(x)$ remains circulant along the optimization path and is
diagonalized by the discrete Fourier transform: writing $F$ for the unitary
discrete Fourier matrix, a symmetric circulant matrix has the form
$F^{-1}\diag(\cdot)F$, the diagonal holding its eigenvalues. Let
$\mu,\delta,d_k\in\R^n$ be the eigenvalue vectors of $M(x)$, $D(v)$ and $D_k$,
so that $M(x)=F^{-1}\diag(\mu)F$, $D(v)=F^{-1}\diag(\delta)F$ and
$D_k=F^{-1}\diag(d_k)F$. The two-sided product is then again circulant, and
\eqref{eq:Hv_formula} becomes a pointwise product of these vectors,
\begin{align}
M(x)^{-1}D(v)M(x)^{-1}&=F^{-1}\diag(\delta/\mu^{2})F ,\nonumber\\
\big(H(x)v\big)_k&=\sum_{j}(d_k)_j\,\delta_j/\mu_j^{2} .
\label{eq:circulant_G}
\end{align}
Two fast Fourier transforms
produce $\mu$ and $\delta$ in $O(n\log n)$, after which all $m$ trace
evaluations amount to a single $m\times n$ matrix--vector product. This is the
one structured case in which the second-order terms are exactly diagonalized.
See, e.g., \cite{gray2006toeplitz,gohberg1994fast}.

For a general Toeplitz $M(x)$, linear solves remain fast, requiring $O(n^2)$
time by Levinson/Schur-type recursions. The second-order terms, however, lose
the structure, since the inverse of a Toeplitz matrix is again Toeplitz only in
special cases, so at this generality $M(x)^{-1}D(v)M(x)^{-1}$ retains the dense
bound.

The row labeled ``$\calG$-invariant, $r$ blocks'' records the favorable case in which
$\calG$ consists of permutations of $r$ blocks and the symmetry-adapted transform is
a block averaging, so that the symmetry collapses the $n\times n$ linear algebra
to $r\times r$ and the whole iteration takes place in the $m_{\calG}$-dimensional
fixed-point subspace. The entry is derived in
Section~\ref{subsec:group_invariant_opt}.

More generally, scalability is governed by whether algebraic structure in $A$
and the constraint subspace $\Sub$ is preserved along the optimization path, as this
directly reduces the cost of linear solves with $M(x)$ and of trace evaluations.

\begin{table}[htbp]
\centering
\scriptsize
\renewcommand{\arraystretch}{1.15}
\setlength{\tabcolsep}{2pt}

\caption{Per-iteration worst-case complexity of Newton-type methods.
Here $m=\dim \Sub$, $m^\perp=\dim(\Sub^\perp)$, and $m_{\calG}=\dim(\Sub^{\calG})$ denote the effective
optimization dimensions in the primal, dual, and group-invariant formulations,
respectively. These quantities are defined and discussed in
Sections~\ref{subsec:dual_newton} and~\ref{subsec:group_invariant_opt}.
The parameter $b$ denotes half-bandwidth, $r$ the number of blocks in the
block-permutation case, $N_{\mathrm{cg}}$ the number of CG iterations, and
$T_{\mathrm{fact}}(\calG)$ the cost of the symmetry-adapted change of basis,
incurred once before the iteration begins.}
\label{tab:per_iteration_complexity}

\begin{tabular}{llll}
\toprule
Structure & Method & Per-iteration cost & Notes \\
\midrule
Dense & Newton-CG &
$O\big(n^3 + N_{\mathrm{cg}}\,(n^3 + m n^2)\big)$ &
default \\

Dense & Exact Newton &
$O(m n^3 + m^2 n^2 + m^3)$ &
small $m$ \\

Dense & Dual Newton-CG &
$O\big(n^3 + N_{\mathrm{cg}}\,(n^3 + m^\perp n^2)\big)$ &
$m^\perp<m$ \\

Banded ($b$) & Newton-CG &
$O\big(n b^2 + N_{\mathrm{cg}}\,(n b^2 + m b^2)\big)$ &
bandwidth preserved \\

Circulant & Newton-CG &
$O\big(n\log n + N_{\mathrm{cg}}\,(n\log n + m n)\big)$ &
FFT diagonalization \\

$\calG$-invariant & Newton-CG &
$O\big(n^3 + N_{\mathrm{cg}}\,(n^3 + m_{\calG} n^2)\big)$ &
plus one-time $T_{\mathrm{fact}}(\calG)$ \\

$\calG$-invariant, $r$ blocks & Newton-CG &
$O\big(r^3 + N_{\mathrm{cg}}(r^3 + m_{\calG} r^2)\big)$ &
independent of $n$ \\
\bottomrule
\end{tabular}
\end{table}

\subsection{Newton-CG for the dual problem}
\label{subsec:dual_newton}

As shown in Section~\ref{subsec:duality_general}, the decomposition
\eqref{eq:maindecomposition} can equivalently be obtained by solving the dual problem
\be
\max_{B\succ0,\; B\in \Sub^\perp}\;\; \log\det(B)-\tr(AB)+n.
\label{eq:dual_problem_section2}
\ee
This problem is smooth and strictly concave on the open convex set
$\Sub^\perp\cap\SPD$, and admits a unique maximizer $B^\ast$. As in the primal
case, we use a Newton-CG method as the default algorithm, and exact Newton
methods only when $\dim(\Sub^\perp)$ is sufficiently small.
Let $\{E_1,\dots,E_{m^\perp}\}$ be a basis of $\Sub^\perp$, where
\be
m^\perp := \dim(\Sub^\perp)=\tfrac{n(n+1)}2-m.
\ee
Fix any $B_0\in \Sub^\perp\cap\SPD$ and write
\be
B(y) := B_0 + \sum_{k=1}^{m^\perp} y_k E_k,
\qquad y\in\R^{m^\perp}.
\ee
Unlike the primal formulation, which is always initialized at $x=0$ (feasible
since $A\succ0$), the dual requires such a feasible $B_0$. Obtaining one is, in
general, a feasibility (phase-one) step: a semidefinite program in the same
$m^\perp$ variables as~\eqref{eq:dual_problem_section2}, whose solvability is
guaranteed by Theorem~\ref{thm:general_decomposition}. In the structured cases
of interest this step is immediate: whenever every matrix in $\Sub$ has zero
diagonal, as for zero-diagonal and off-diagonal prescribed-support constraints,
$\tr(I_nX)=\tr(X)=0$ for all $X\in \Sub$, so $I_n\in \Sub^\perp$ and one may take
$B_0=I_n$, with analogous explicit choices in the banded and group-invariant
settings.
Define
\be
\Psi(y) := \log\det\bigl(B(y)\bigr)-\tr\bigl(AB(y)\bigr)+n.
\ee
Then
\begin{align}
\frac{\partial\Psi}{\partial y_k}(y)
&=
\tr\big(B(y)^{-1}E_k\big)-\tr(AE_k), \nonumber \\
\frac{\partial^2\Psi}{\partial y_k\partial y_\ell}(y)
&=
-\tr\big(B(y)^{-1}E_kB(y)^{-1}E_\ell\big),
\qquad k,\ell=1,\dots,m^\perp .
\end{align}

\subsection{Group-invariant optimization}
\label{subsec:group_invariant_opt}

Assume that a finite group $\calG\subset\Orth(n)$ acts on $\Sym$ by conjugation
$X\mapsto P X P^\top$, and that both $A$ and $\Sub$ are $\calG$-invariant.
Define
\be
\Sub^{\calG} := \{X\in \Sub:\;P X P^\top=X\ \forall P\in\calG\},
\qquad
m_{\calG} := \dim \Sub^{\calG} \le m.
\ee
By \eqref{eq:G_fixed}, the unique minimizer
$C^\ast$ lies in $\Sub^{\calG}$. Writing
\be
C(x)=\sum_{k=1}^{m_{\calG}} x_k \widetilde D_k,
\ee
where $\{\widetilde D_k\}$ is a basis of $\Sub^{\calG}$, the optimization problem reduces
to dimension $m_{\calG}$.
Newton-CG applies verbatim with the reduced derivatives.

\paragraph{Complexity}
The per-iteration cost is that of Table~\ref{tab:per_iteration_complexity} with
$m$ replaced by $m_{\calG}$. In the generic dense case,
\be
\label{eq:group_newton_cost}
T_{\mathrm{Newton}}^{\calG}
=
O\big(n^3+N_{\mathrm{cg}}(n^3+m_{\calG} n^2)\big).
\ee
Reducing the optimization dimension from $m$ to $m_{\calG}$ shrinks the number of
trace evaluations, but not the $n\times n$ linear algebra: $M(x)$ is still an
$n\times n$ matrix that must be factorized once per outer iteration, and each
Hessian--vector product still applies $M(x)^{-1}$ twice. To this one adds the
cost $T_{\mathrm{fact}}(\calG)$ of the symmetry-adapted change of basis. That
transform depends only on $\calG$ and not on the iterate, so it is computed once
before the iteration begins, outside the per-iteration cost.

Further reduction occurs when the symmetry shrinks $M(x)$ itself. In the
block-permutation setting of the last row of
Table~\ref{tab:per_iteration_complexity}, Example~IV of
Appendix~\ref{app:examples}, $\calG$ permutes $r$ blocks and every
$\calG$-invariant matrix is determined by $O(r^2)$ coefficients, one for each
off-diagonal block pair and two for each diagonal block. The symmetry-adapted transform is a block averaging, so
$T_{\mathrm{fact}}(\calG)$ is one $O(n^2)$ pass over $A$, and nothing when $A$ is
supplied by its block coefficients. Solves with $M(x)$ and the two-sided
products of Remark~\ref{rem:banded_hessvec} then act on $r\times r$ data at cost
$O(r^3)$, and each of the $m_{\calG}$ trace evaluations costs $O(r^2)$, giving a
per-iteration complexity independent of $n$:
\be
\label{eq:group_block_cost}
T_{\mathrm{Newton}}^{\calG,\,r\text{ blocks}}
=
O\big(r^3+N_{\mathrm{cg}}(r^3+m_{\calG} r^2)\big),
\qquad m_{\calG}\le r(r+1)/2 .
\ee

\begin{remark}
For general finite groups, $T_{\mathrm{fact}}(\calG)$ may be comparable to dense
linear algebra.
For certain groups, special algebraic structure yields fast transforms (e.g.
FFTs for cyclic groups), dramatically reducing $T_{\mathrm{fact}}(\calG)$.
\end{remark}

\section{Exponential Utility Maximization}
 \label{sec:finance}
The utility maximization problem is central to mathematical finance, providing a rigorous framework for optimal decision-making under uncertainty and for linking investors’ preferences to market dynamics. A particularly important specification is exponential utility, which exhibits constant absolute risk aversion.
For background and details see \cite{merton1969lifetime}, Chapter~3 of \cite{karatzas1998methods}, \cite{schachermayer2004utility}, Chapter~2 of \cite{follmer2016stochastic}
and Chapter~10 in \cite{eberlein2019mathematical}.

We consider a discrete-time market model with normally distributed asset increments and a general information and hedging structure. Let \(N\in\mathbb{N}\) denote the time horizon. At each time \(i=1,\dots,N\), suppose that \(p_i\in\mathbb{N}\) risky assets are available for trading, with increments
$
X^i_1,\dots,X^i_{p_i}.
$
In addition, we allow for further random variables
$
X^i_{p_i+1},\dots,X^i_{q_i}$,
$q_i\ge p_i,$
which represent increments of other risky factors or assets that may influence the market but are not tradable at time \(i\).
The information available for trading at time \(i\) is generated by the random vector
\[
Y^i=(Y^i_1,\dots,Y^i_{m_i})
=
\bigl(
X^1_1,\dots,X^1_{q_1},\;
\dots,\;
X^{i-1}_1,\dots,X^{i-1}_{q_{i-1}}
\bigr) A^i,
\]
where \(m_i \le \sum_{j=1}^{i-1} q_j\) and \(A^i\) is a deterministic matrix. Thus, the investor’s information at time \(i\) consists of linear combinations of past market increments.

Let \(n=\sum_{i=1}^N q_i\), and assume that the random vector
\[
Z=(Z_1,...,Z_n):=\bigl(X^1_1,\dots,X^1_{q_1},\;\dots,\;X^N_1,\dots,X^N_{q_N}\bigr)
\]
has a non-degenerate multivariate normal distribution with mean zero (for simplicity) and covariance matrix \(\Sigma\in\SPD\). Denote by \(\pre=\Sigma^{-1}\) the corresponding precision matrix. 

A trading strategy is a random vector
$
\gamma=\bigl(
\gamma^1_1,\dots,\gamma^1_{p_1},\;
\dots,\;
\gamma^N_1,\dots,\gamma^N_{p_N}
\bigr),
$
where \(\gamma^i_j\) denotes the number of shares held in the risky asset with increment \(X^i_j\). The position \(\gamma^i_j\) is required to be \(Y^i\)-measurable, that is,
\be\label{strategy}
\gamma^i_j = g^i_j(Y^i) \; \text{for a measurable}\ 
g^i_j:\mathbb{R}^{m_i}\to\mathbb{R},
\, i=1,\dots,N,\ j=1,\dots,p_i.
\ee
The corresponding terminal portfolio value is
\begin{equation}\label{portfolio}
V^\gamma
=
\sum_{i=1}^N\sum_{j=1}^{p_i}\gamma^i_jX^i_j.
\end{equation}
Each term \(\gamma^i_jX^i_j\) represents the profit (or loss) generated by holding \(\gamma^i_j\) units of the \(j\)-th tradable asset over period \(i\). Hence, \(V^\gamma\) is the total cumulative trading gain over the investment horizon. We denote by \(\mathcal A\) the set of all trading strategies \(\gamma\) for which \(V^\gamma\) is integrable under every centered nondegenerate Gaussian law of \(Z\), which is what makes the change-of-measure identities used below well defined.

Next, we introduce some notation. 
Let \(\mathcal A_l \subset \mathcal A\) be the set of trading strategies corresponding to linear functions $g^i_j$ in~\eqref{strategy} ($g^i_j(0)=0$). For these $V^\gamma$ is a quadratic form in $Z$ and the integrability requirement holds automatically.
For any \(i=1,\ldots,N\), \(j=1,\ldots,p_i\), and \(k=1,\ldots,m_i\), the product \(Y^i_k X^i_j\) can be represented as,
\be\label{representation}
Y^i_k X^i_j = -\frac{1}{2}\, Z D^{i,j,k} Z^\top,
\ee
where \(D^{i,j,k}\in\Sym\) is the unique symmetric matrix representing this quadratic form. Since the random vector $Y^i$ is 
determined by 
$X^r_j$, $r<i$, $j=1,\ldots,q_r$, we obtain that $D^{i,j,k}$
is a zero-diagonal matrix.
Let \(\band\subset\Sym\) denote the linear span of all matrices of the form \(D^{i,j,k}\).

A strategy \(\gamma\) lies in \(\mathcal A_l\) if and only if there exists
$X\in\band$ such that
\be\label{eq:v_gamma_bilinear}
V^{\gamma} = -\frac{1}{2} Z X Z^\top .
\ee

The investor’s preferences are described by the exponential utility function
\[
u(x)=-\exp(-\rho x), \qquad x\in\mathbb{R},
\]
with absolute risk aversion parameter \(\rho>0\). The optimization problem is therefore
\be\label{eq:exp_util}
\max_{\gamma\in\mathcal{A}}
\ \mathbb{E}_{\mathbb{P}}\!\left[-\exp\!\left(-\rho V^\gamma\right)\right],
\ee
where \(\mathbb{P}\) is the probability measure under which the Gaussian vectors
\(X^i_j\) and \(Y^i\) introduced above are distributed, and
\(\mathbb{E}_{\mathbb{P}}\) denotes expectation with respect to it.
For any $\gamma\in\mathcal A$ and \(\lambda\in\mathbb{R}\) we have 
$\lambda\gamma\in\mathcal A$ and 
\(V^{\lambda\gamma}=\lambda V^\gamma\). Hence, without loss of generality, we set \(\rho:=1\).

By \eqref{eq:v_gamma_bilinear}, a linear trading strategy corresponds to a matrix
\(X\in\band\) with terminal value \(-\frac12 ZXZ^\top\), so that \(\band\) is the
matrix counterpart of the set of admissible linear strategies. The exponential
utility maximization problem can therefore be viewed as a Gaussian
log-determinant optimization problem over the subspace \(\band\).

The following solves the exponential utility maximization problem \eqref{eq:exp_util}.
\begin{proposition}\label{thm:finance_value}
There exists a unique decomposition 
\be\label{eq:finance_decomposition}
\pre=\hat Q^{-1}+\hat\Gamma, \ \ \hat Q\succ 0, \ \ \hat Q\in \band^{\perp}, \ \ \hat\Gamma\in\band.
\ee
The maximizer $\hat\gamma$ for the optimization problem \eqref{eq:exp_util} is unique (and lies in $\mathcal A_l$) with terminal wealth given by 
\be\label{eq:optimal_v_gamma_bilinear}
V^{\hat\gamma}=-\frac{1}{2} Z \hat\Gamma Z^{\top}. 
\ee
The corresponding value is given by 
\be\label{eq:expected_value}
\mathbb E_{\mathbb P}\left[-\exp\left(- V^{\hat\gamma}\right)\right]=-\sqrt{\frac{\det(\hat Q)}{\det(\Sigma)}}.
\ee
\end{proposition}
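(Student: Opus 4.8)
The plan is to decompose the statement into three parts: the existence and uniqueness of the decomposition \eqref{eq:finance_decomposition}, the reduction of the maximization over linear strategies $\mathcal A_l$ to the primal log-determinant problem, and a verification argument showing that no (possibly nonlinear) strategy in $\mathcal A$ beats the linear optimizer.

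First, I would observe that \eqref{eq:finance_decomposition} is a direct application of Theorem~\ref{thm:general_decomposition} with $A=\pre$ and $S=\band$. The only hypothesis to verify is the nondegeneracy condition \eqref{eq:nopsddirections}, i.e. $\band\cap\SPSD=\{0\}$. Since each generator $D^{i,j,k}$ is a zero-diagonal symmetric matrix, every $D\in\band$ has vanishing diagonal; but a positive semidefinite matrix with zero diagonal must be the zero matrix, so $\band\cap\SPSD=\{0\}$ and the theorem applies verbatim, producing the unique pair $(\hat Q,\hat\Gamma)$.

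Next I would evaluate the objective on $\mathcal A_l$. For $\gamma\in\mathcal A_l$ with $V^\gamma=-\tfrac12 Z D Z^\top$, $D\in\band$, the Gaussian moment generating formula gives
\[
\mathbb E_{\mathbb P}\!\left[\exp(-V^\gamma)\right]
=\mathbb E_{\mathbb P}\!\left[\exp(\tfrac12 Z D Z^\top)\right]
=\sqrt{\frac{|\pre|}{|\pre-D|}}
\]
whenever $\pre-D\succ0$ (and $+\infty$ otherwise). Maximizing $\mathbb E_{\mathbb P}[-\exp(-V^\gamma)]$ over $D\in\band$ is therefore equivalent to minimizing $-\log|\pre-D|$ subject to $\pre-D\succ0$, which is exactly the primal problem \eqref{eq:primal_logdet} with $A=\pre$. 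Its unique minimizer is $D=\hat\Gamma$, whence $\pre-\hat\Gamma=\hat Q^{-1}$ and the optimal value equals $-\sqrt{|\pre|/|\hat Q^{-1}|}=-\sqrt{|\hat Q|/|\cov|}$, matching \eqref{eq:expected_value}.

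The crux is upgrading optimality within $\mathcal A_l$ to optimality within all of $\mathcal A$. Here I would introduce the measure $\mathbb Q$ with $d\mathbb Q/d\mathbb P\propto\exp(-V^{\hat\gamma})=\exp(\tfrac12 Z\hat\Gamma Z^\top)$; under $\mathbb Q$ the density of $Z$ is proportional to $\exp(-\tfrac12 z(\pre-\hat\Gamma)z^\top)=\exp(-\tfrac12 z\hat Q^{-1}z^\top)$, so $Z\sim N(0,\hat Q)$ under $\mathbb Q$. The optimality condition $\hat Q\in\band^{\perp}$ reads $\tr(\hat Q D^{i,j,k})=0$, i.e. $\mathbb E_{\mathbb Q}[Y^i_k X^i_j]=-\tfrac12\tr(\hat Q D^{i,j,k})=0$ for all $i,j,k$. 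Since $(X^i_j,Y^i)$ is jointly Gaussian under $\mathbb Q$, this uncorrelatedness is equivalent to independence, so $\mathbb E_{\mathbb Q}[g^i_j(Y^i)X^i_j]=\mathbb E_{\mathbb Q}[g^i_j(Y^i)]\,\mathbb E_{\mathbb Q}[X^i_j]=0$, and summing over $i,j$ yields $\mathbb E_{\mathbb Q}[V^\gamma]=0$ for every admissible $\gamma$ (the case $\gamma=\hat\gamma$ also follows directly from $-\tfrac12\tr(\hat Q\hat\Gamma)=0$). Writing $\mathbb E_{\mathbb P}[\exp(-V^\gamma)]=c\,\mathbb E_{\mathbb Q}[\exp(V^{\hat\gamma}-V^\gamma)]$ with $c=\mathbb E_{\mathbb P}[\exp(-V^{\hat\gamma})]$ and applying $e^u\ge1+u$ gives $\mathbb E_{\mathbb P}[\exp(-V^\gamma)]\ge c\,(1+\mathbb E_{\mathbb Q}[V^{\hat\gamma}-V^\gamma])=c$, which is precisely the asserted optimality of $\hat\gamma$. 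Equality in $e^u\ge1+u$ forces $V^\gamma=V^{\hat\gamma}$ $\mathbb Q$-a.s., giving uniqueness of the optimal terminal wealth, and a backward-induction non-redundancy argument using the non-degeneracy of $Z$ then recovers uniqueness of the strategy $\hat\gamma$ itself.

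The main obstacle is this last verification step: ensuring integrability of $V^\gamma$ under $\mathbb Q$ so that $\mathbb E_{\mathbb Q}[V^\gamma]=0$ and the convexity bound are legitimate for all admissible $\gamma$ (it suffices to restrict to $\gamma$ with $\mathbb E_{\mathbb P}[\exp(-V^\gamma)]<\infty$, since otherwise the objective is $-\infty$ and trivially dominated), and carefully justifying the passage from uncorrelatedness to conditional independence from the joint Gaussianity of $(X^i_j,Y^i)$ under $\mathbb Q$. By contrast, the decomposition and the value computation are comparatively routine once the Gaussian integral is in hand.
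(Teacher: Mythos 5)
Your proposal is correct and follows essentially the same route as the paper: the zero-diagonal observation to invoke Theorem~\ref{thm:general_decomposition}, the change of measure to the Gaussian law $N(0,\hat Q)$ under which every admissible terminal wealth has zero mean (via trace-orthogonality of $\hat Q$ with $\band$ and joint Gaussianity giving independence of $X^i$ and $Y^i$), and a convex-duality verification inequality --- your $e^u\ge 1+u$ is equivalent to the paper's Legendre--Fenchel bound optimized over the scalar $z$. Your additional identification of the linear-strategy subproblem with the primal log-determinant problem \eqref{eq:primal_logdet} via the Gaussian moment generating function is a pleasant shortcut to the value formula \eqref{eq:expected_value}, but it does not alter the substance of the argument, and the integrability caveat you flag for $\mathbb{E}_{\mathbb{Q}}[V^\gamma]=0$ is a technical point the paper's proof shares.
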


\begin{remark}
The framework of Proposition \ref{thm:finance_value} shows that, in the Gaussian setting with exponential utility, 
the optimal portfolio is linear and can be characterized through a specific matrix decomposition.
The model accommodates partial information, non-traded assets, the trading of indices rather than individual securities, and an information structure that need not be a filtration, so that information may be lost over time.
\end{remark}

\begin{proof}[Proof of Proposition~\ref{thm:finance_value}]
Every matrix in \(\band\) has zero diagonal. If \(X\in\band\cap\mathbb S_+^n\), then \(X\) is positive semidefinite and \(X_{ii}=0\) for every \(i\). For a positive semidefinite matrix,
$
|X_{ij}|^2\le X_{ii}X_{jj}=0$,
and therefore \(X_{ij}=0\) for every \(i,j\). Hence \(X=0\), and consequently
$
\band\cap\mathbb S_+^n=\{0\}.
$
Thus, condition~\eqref{eq:nopsddirections} holds, and Theorem~\ref{thm:general_decomposition} yields the existence and uniqueness of the decomposition~\eqref{eq:finance_decomposition}.

Define a probability measure $\hat{\mathbb Q}$ such that
the distribution of $(Z_1,...,Z_n)$ is multivariate normal with mean zero and covariance matrix $\hat Q$. From \eqref{representation} 
it follows that for any \(i=1,\ldots,N\), \(j=1,\ldots,p_i\), and \(k=1,\ldots,m_i\)
\[
\mathbb E_{\hat{\mathbb Q}}\left[Y^i_k X^i_j\right] = -\frac{1}{2}\mathbb E_{\hat{\mathbb Q}}\left[Z D^{i,j,k} Z^\top\right]=-\frac{1}{2}\tr\left(D^{i,j,k} \hat Q\right)=0.
\]
Hence, since $Z$ is Gaussian and the above cross-covariances vanish, the random vectors $(X^i_1,\dots,X^i_{p_i})$ and $Y^i$ are independent under $\hat{\mathbb Q}$  for any $i\leq N$.
Thus, from the tower property of conditional expectation we conclude 
\be\label{nuetral}
\mathbb E_{\hat{\mathbb Q}}\left[V^{\gamma}\right]=0, \ \ \forall \gamma\in\mathcal A.
\ee

Next, choose $\gamma\in\mathcal A$.
For any $z>0$
\begin{align*}
\mathbb E_{\mathbb P}\left[\exp\left(-V^{ \gamma}\right)\right]
&=\mathbb E_{\mathbb P}\left[\exp\left(-V^{ \gamma}\right)+z \frac{d{\hat{\mathbb Q}}}{d\mathbb P}V^{\gamma}\right]\\
&\geq \mathbb E_{\mathbb P}\left[z\frac{d{\hat{\mathbb Q}}}{d\mathbb P}\left(1-\log
\left(z\frac{d{\hat{\mathbb Q}}}{d\mathbb P}\right)\right)\right]\\
&=z-z\log z-z\mathbb E_{\hat{\mathbb Q}}\left[\log
\left(\frac{d{\hat{\mathbb Q}}}{d\mathbb P}\right)\right].
\end{align*}
The first equality is due to \eqref{nuetral}.
The above inequality follows from the well-known relation $xy \leq e^x+y(\log y-1)$ for all $x \in \mathbb{R}, y>0$ by setting $x=-V^{\gamma}$ and 
$y=z\frac{d\hat{\mathbb Q}}{d\mathbb P}$, and rearranging terms.
The last equality is straightforward. 
From simple calculus it follows that the concave function defined for $z>0$ by
$$z\mapsto z-z\log z-z\,\mathbb E_{\hat{\mathbb Q}}\left[\log
\left(\frac{d{\hat{\mathbb Q}}}{d\mathbb P}\right)\right]$$
attains its maximum at $z^{*}:=\exp\left(-\mathbb E_{\hat{\mathbb Q}}\left[\log
\left(\frac{d{\hat{\mathbb Q}}}{d\mathbb P}\right)\right]\right)$ and the corresponding maximal value
is also $z^{*}$. Thus,
\be\label{lowerbound}
\mathbb E_{\mathbb P}\left[\exp\left(-V^{ \gamma}\right)\right]\geq \exp\left(-\mathbb E_{\hat{\mathbb Q}}\left[\log
\left(\frac{d{\hat{\mathbb Q}}}{d\mathbb P}\right)\right]\right), \ \ \forall \gamma\in\mathcal A.
\ee
The bound \eqref{lowerbound} was derived for an arbitrary
$\gamma\in\mathcal A$, with no restriction to $\mathcal A_l$. We now exhibit a
\emph{candidate} strategy attaining it. Since $\hat\Gamma\in\band$,
the characterization \eqref{eq:v_gamma_bilinear} supplies a strategy
$\hat\gamma\in\mathcal A_l$ satisfying \eqref{eq:optimal_v_gamma_bilinear}. At
this stage $\hat\gamma$ is only a candidate, and its optimality is established
below.
For multivariate normal distributions the likelihood ratio is
\be
\frac{d{{\hat{\mathbb Q}}}}{d\mathbb P}=\frac{\exp\left(-\frac{1}{2}Z\hat Q^{-1}Z^\top \right)/\det(\hat Q)^{1/2}}{\exp\left(-\frac{1}{2}Z \pre Z^\top\right)/\det(\Sigma)^{1/2}}.
\label{eq:radon_nykodim}
\ee 
Thus, from \eqref{eq:finance_decomposition}, \eqref{eq:optimal_v_gamma_bilinear} and \eqref{eq:radon_nykodim} we obtain 
\[
V^{\hat\gamma}+\log\left(\frac{d{{\hat{\mathbb Q}}}}{d\mathbb P}\right)=\frac{1}{2}\left(\log\det(\Sigma)-\log\det(\hat Q)\right).
\]
Hence, from \eqref{nuetral}
\begin{align}
\mathbb E_{\mathbb P}\left[\exp\left(-V^{\hat \gamma}\right)\right]
&=\exp\left(-\frac{1}{2}\left(\log\det(\Sigma)-\log\det(\hat Q)\right)\right)
\nonumber\\
&=\exp\left(-\mathbb E_{{\hat{\mathbb Q}}}\left[\log
\left(\frac{d{\hat{\mathbb Q}}}{d\mathbb P}\right)\right]\right).
\label{equality}
\end{align}
Comparing \eqref{equality} with \eqref{lowerbound}, the candidate $\hat\gamma$
attains the bound that holds for every $\gamma\in\mathcal A$. Hence $\hat\gamma$
is an optimal portfolio, and the value is given by the right hand side of \eqref{eq:expected_value}.
For uniqueness, equality in $xy\leq e^x+y(\log y-1)$ holds only when $x=\log y$, so any optimal
$\gamma$ satisfies $-V^{\gamma}=\log\left(z^{*}\frac{d\hat{\mathbb Q}}{d\mathbb P}\right)$ almost surely.
The optimal terminal wealth is therefore unique. Finally, the map $\gamma\mapsto V^{\gamma}$ is injective:
conditionally on the increments before time $i$, the strategy $\gamma^i$ is deterministic and the covariance
of $(X^i_1,\ldots,X^i_{p_i})$ is nondegenerate, so $V^{\gamma}=V^{\gamma'}$ almost surely forces
$\gamma^i=\gamma'^i$, and backward induction on $i=N,\ldots,1$ gives $\gamma=\gamma'$. Hence the optimal
portfolio is unique.
\end{proof}
\section{A triangular linear algebra result}
\label{sec:linear_algebra}
Let $\calU_n\subset\calM_n(\R)$ denote the strictly upper triangular
matrices, and $\calL_n\subset\calM_n(\R)$ the lower triangular matrices
(with no restriction on the diagonal).

Given $A,\pre\in\SPD$, we seek a pair
$(U,L)\in\calU_n\times\calL_n$ satisfying
\begin{equation}
\label{eq:twisted_decomposition}
\pre=L+U+UAL,
\end{equation}
which we call the \emph{bilinear triangular decomposition} of $\pre$. The
pair $(A,\pre)$ is \emph{triangularly regular} when it exists. We present a direct algorithm that computes
the decomposition: its step count is fixed by $n$ alone and its output is exact
in exact arithmetic, in contrast to the iterative algorithms of
Section~\ref{sec:optimization}.
Theorem~\ref{thm:triangular_decomposition} proves the algorithm's correctness
and the uniqueness of the decomposition whenever it exists, while a vanishing
scalar pivot in the algorithm certifies non-existence.
Proposition~\ref{prop:variational_U} then relates the decomposition to
Theorem~\ref{thm:general_decomposition}: applied to a $2n\times2n$ matrix built
from $A$ and $\pre$, with $\Sub$ the subspace of off-diagonal blocks
$\bigl(\begin{smallmatrix}0&U\\U^\top&0\end{smallmatrix}\bigr)$ with
$U\in\calU_n$, the theorem produces a $U$ for every positive definite pair,
including those for which \eqref{eq:twisted_decomposition} has no solution, and
triangular regularity is exactly the invertibility of $I_n+UA$.

Algorithm~\ref{alg:backward_elimination} is the natural analogue of $LU$
decomposition without pivoting, with four differences: (i)~elimination proceeds \emph{backward} (from the
lower-right corner upward); (ii)~each step has one scalar pivot $\pi_k$
preceded by a triangular row-solve $M_0^{\top}u=\pre_{J,k}$ with
$M_0=I_{|J|}+A_{J,J}L_{J,J}$; (iii)~a zero pivot certifies non-existence of
the decomposition, on an algebraic set of positive definite pairs $(A,\pre)$;
and (iv)~the algorithm then halts, rather than recovering by a row swap as
partial pivoting does in standard $LU$.

At step $k$, with $J=\{k+1,\ldots,n\}$ and the trailing-block decomposition
$\pre_{J,J}=L_{J,J}+U_{J,J}+U_{J,J}A_{J,J}L_{J,J}$ already in hand, the
algorithm determines the new row $U_{k,J}$ of $U$, the new diagonal entry
$L_{kk}$, and the column $L_{J,k}$.

\begin{algorithm}[H]
\caption{Backward triangular elimination for $\pre=L+U+UAL$}
\label{alg:backward_elimination}
\begin{algorithmic}[1]
\Require $A,\pre\in\SPD$
\Ensure The unique pair $(U,L)$, or a zero-pivot certificate of nonexistence
\State $U\gets\mathbf{0}_{n,n}$, \; $L\gets\mathbf{0}_{n,n}$, \; $L_{nn}\gets\pre_{nn}$
\For{$k=n-1,n-2,\ldots,1$}
    \State $J\gets\{k+1,\ldots,n\}$
    \State $K_0\gets I_{|J|}+U_{J,J}A_{J,J}$, \; $M_0\gets I_{|J|}+A_{J,J}L_{J,J}$ \label{lin:form}
    \State Solve $M_0^\top u=\pre_{J,k}$, \; $U_{k,J}\gets u^\top$ \label{lin:solveM}
    \State Solve $K_0\,z=\pre_{J,k}$ and $K_0\,w=U_{J,J}\,A_{J,k}$ \label{lin:solveK}
    \State $\pi_k\gets 1+u^\top A_{J,k}-u^\top A_{J,J}\,w$
    \If{$\pi_k=0$}
        \State \textbf{stop:} \eqref{eq:twisted_decomposition} has no solution
    \EndIf
    \State $L_{kk}\gets(\pre_{kk}-u^\top A_{J,J}\,z)/\pi_k$, \; $L_{J,k}\gets z-w\,L_{kk}$
\EndFor
\State \Return $U,L$
\end{algorithmic}
\end{algorithm}

\begin{remark}[Efficient implementation, $O(n^3)$]
\label{rem:complexity}
Implemented naively, Algorithm~\ref{alg:backward_elimination} costs $O(n^4)$: lines~\ref{lin:form}--\ref{lin:solveK} build $M_0$ and $K_0$ afresh at
each step and solve with them, which is $O\bigl((n-k)^3\bigr)$ work. The overall cost
reduces to $O(n^3)$, matching standard LU, if those three lines are replaced by
an incremental update of the two inverses $M_0^{-1}$ and $K_0^{-1}$ as $k$
decreases, followed by three matrix--vector products. Between
consecutive steps the new index is prepended to the trailing block,
which writes
$M_0=\bigl(\begin{smallmatrix}\tau & y^\top\\ x & M_0^{\rm prev}\end{smallmatrix}\bigr)$
with $(M_0^{\rm prev})^{-1}$ already in hand. The Schur-complement
bordering identity
\[
M_0^{-1}=
\begin{pmatrix}
1/\sigma & -\hat y^\top/\sigma\\[2pt]
-\hat x/\sigma & (M_0^{\rm prev})^{-1}+\hat x\hat y^\top/\sigma
\end{pmatrix},
\]
with
\[
\hat x=(M_0^{\rm prev})^{-1}x,\qquad \hat y^\top=y^\top (M_0^{\rm prev})^{-1},\qquad \sigma=\tau-y^\top\hat x,
\]
updates the inverse in $O\bigl((n-k)^2\bigr)$, and the analogous formula updates
$K_0^{-1}$. Lines~\ref{lin:solveM} and~\ref{lin:solveK} then become the
matrix--vector products $u=(M_0^{\top})^{-1}\pre_{J,k}$,
$z=K_0^{-1}\pre_{J,k}$ and $w=K_0^{-1}U_{J,J}A_{J,k}$, each of cost
$O\bigl((n-k)^2\bigr)$. Summing over $k$ gives
$O(n^3)$.
\end{remark}

The correctness and uniqueness arguments rest on the following regularity
lemma, which rules out the degenerate possibility that a solution
of~\eqref{eq:twisted_decomposition} coexists with a singular $I_n+UA$.

\begin{lemma}[Regularity]
\label{lem:regularity}
Let $A,\pre\in\SPD$. If
$(U,L)\in\calU_n\times\calL_n$ satisfies \eqref{eq:twisted_decomposition}, then $I_n+UA$ is
invertible, and hence $L=(I_n+UA)^{-1}(\pre-U)$.
\end{lemma}
\begin{proof}
The identity is equivalent to $(I_n+UA)L=\pre-U$. Suppose $v\in\R^n$
satisfies $v^\top(I_n+UA)=0$. Then $v^\top(\pre-U)=v^\top(I_n+UA)L=0$, so
$v^\top\pre=v^\top U$. On the other hand $v^\top(I_n+UA)=0$ gives
$v^\top UA=-v^\top$, and multiplying by $A^{-1}$ on the right,
$v^\top U=-v^\top A^{-1}$. Combining the two identities yields
$v^\top(\pre+A^{-1})=0$. Since $\pre+A^{-1}$ is positive definite, $v=0$;
thus $I_n+UA$ is invertible.
\end{proof}

\begin{theorem}[Bilinear triangular decomposition]
\label{thm:triangular_decomposition}
Let $A,\pre\in\SPD$.
\begin{enumerate}
\item[\textbf{(i)}] Algorithm~\ref{alg:backward_elimination} is well defined:
at every step the matrices $M_0$ and $K_0$ are invertible.
\item[\textbf{(ii)}] Whenever a pair $(U,L)\in\calU_n\times\calL_n$ satisfying
\eqref{eq:twisted_decomposition} exists, it is unique,
$L=(I_n+UA)^{-1}(\pre-U)$, and $I_n+UA$ is invertible with
$\det(I_n+UA)=\prod_{k=1}^{n-1}\pi_k$. In addition, no pivot vanishes, so that
Algorithm~\ref{alg:backward_elimination} reaches its last step and returns this
pair.
\item[\textbf{(iii)}] Otherwise Algorithm~\ref{alg:backward_elimination} stops
at a zero pivot $\pi_k=0$, and this certifies that no pair in
$\calU_n\times\calL_n$ satisfies \eqref{eq:twisted_decomposition}.
\end{enumerate}
\end{theorem}

\begin{proof}
Fix $k$ and $J=\{k+1,\ldots,n\}$. Because any $U\in\calU_n$ is strictly upper
triangular and any $L\in\calL_n$ is lower triangular, the identity
\eqref{eq:twisted_decomposition} restricted to $\{k\}\cup J$ decouples into the trailing block
\begin{equation}
\label{eq:trailing_block}
\pre_{J,J}=L_{J,J}+U_{J,J}+U_{J,J}A_{J,J}L_{J,J},
\end{equation}
the upper-right block $\pre_{k,J}=U_{k,J}\,M_0$ with
$M_0=I_{|J|}+A_{J,J}L_{J,J}$, and the first-column block
\begin{equation}
\label{eq:col_block}
\begin{pmatrix}1+u^\top A_{J,k} & u^\top A_{J,J}\\ U_{J,J}\,A_{J,k} & K_0\end{pmatrix}
\begin{pmatrix}L_{kk}\\ L_{J,k}\end{pmatrix}
=\begin{pmatrix}\pre_{kk}\\ \pre_{J,k}\end{pmatrix},
\qquad u^\top:=U_{k,J},
\end{equation}
where $K_0=I_{|J|}+U_{J,J}A_{J,J}$.

\textbf{(i)} \emph{Well-posedness}. Whenever the trailing
relation~\eqref{eq:trailing_block} holds, Sylvester's determinant identity
$\det(I+XY)=\det(I+YX)$ gives
\begin{align}
\det(M_0)\,\det(K_0)
&=\det(I_{|J|}+A_{J,J}L_{J,J})\,\det(I_{|J|}+U_{J,J}A_{J,J})\nonumber\\
&=\det(I_{|J|}+A_{J,J}L_{J,J})\,\det(I_{|J|}+A_{J,J}U_{J,J})\nonumber\\
&=\det\!\big[(I_{|J|}+A_{J,J}U_{J,J})(I_{|J|}+A_{J,J}L_{J,J})\big]\nonumber\\
&=\det\!\big[I_{|J|}+A_{J,J}\big(L_{J,J}+U_{J,J}+U_{J,J}A_{J,J}L_{J,J}\big)\big]\nonumber\\
&=\det(I_{|J|}+A_{J,J}\pre_{J,J}) ,
\label{eq:mk_det}
\end{align}
the last step by~\eqref{eq:trailing_block}. This determinant is positive:
$A_{J,J}\pre_{J,J}$ is similar to
$A_{J,J}^{1/2}\pre_{J,J}A_{J,J}^{1/2}\succ0$, so its eigenvalues are positive
and those of $I_{|J|}+A_{J,J}\pre_{J,J}$ exceed $1$. Hence
$\det(M_0)\det(K_0)>0$, so neither factor vanishes and both $M_0$ and $K_0$ are
invertible. Since the algorithm
maintains~\eqref{eq:trailing_block} for the trailing block by construction,
every step is well defined.

\textbf{(ii)} \emph{Construction, uniqueness and the determinant}. We show by backward
induction on $k$ that the entries of \emph{any} pair
$(U,L)\in\calU_n\times\calL_n$ with \eqref{eq:twisted_decomposition} are forced, and equal those
computed by the algorithm. The base case is $L_{nn}=\pre_{nn}$, forced by the
$(n,n)$ entry. For the inductive step, suppose $U_{J,J},L_{J,J}$ are
determined. By~\eqref{eq:mk_det}, $M_0$ is invertible, so the upper-right block
gives $U_{k,J}=(M_0^\top)^{-1}\pre_{J,k}$. Taking the Schur complement of $K_0$
in~\eqref{eq:col_block} reduces it to $\pi_k\,L_{kk}=\pre_{kk}-u^\top A_{J,J}z$,
where $z=K_0^{-1}\pre_{J,k}$ and $\pi_k=1+u^\top A_{J,k}-u^\top A_{J,J}w$ with
$w=K_0^{-1}U_{J,J}A_{J,k}$. If $\pi_k\neq0$, then $L_{kk}$ and
$L_{J,k}=z-w\,L_{kk}$ are determined. Hence a solution, if it exists, is unique
and coincides with the algorithm's output. Conversely, if no zero pivot occurs
the algorithm produces a pair satisfying \eqref{eq:twisted_decomposition}.
For the determinant, taking determinants of the
block form of $I_{|K|}+U_{K,K}A_{K,K}$ for $K=\{k\}\cup J$ and using the Schur
complement of $K_0$ gives $\det(I_{|K|}+U_{K,K}A_{K,K})=\pi_k\,\det(K_0)
=\pi_k\,\det(I_{|J|}+U_{J,J}A_{J,J})$, so by induction
$\det(I_n+UA)=\prod_{k=1}^{n-1}\pi_k$. This proves~\textbf{(ii)}.

\textbf{(iii)} \emph{Non-existence}. Suppose the algorithm stops at a
zero pivot $\pi_k=0$ but a pair $(U,L)\in\calU_n\times\calL_n$ with
\eqref{eq:twisted_decomposition} existed. By the forcing above it agrees with the algorithm
through step $k$, so its $k$-th pivot is also zero and hence
$\det(I_n+UA)=\prod_j\pi_j=0$; but Lemma~\ref{lem:regularity} makes $I_n+UA$
invertible, a contradiction. Thus no solution exists.
\end{proof}

The next proposition connects the decomposition to the general result of
Section~\ref{sec:general_decomposition}: the matrix $U$ is exactly the
component produced by applying Theorem~\ref{thm:general_decomposition} to a
$2n\times2n$ matrix. This viewpoint also supplies the positive definiteness of
the matrix $S_U$, defined for $U\in\calU_n$ by
\be
\label{eq:S_U_def}
S_U:=\pre-U-U^\top-UAU^\top .
\ee

\begin{proposition}[Variational form of $U$; positivity of $S_U$]
\label{prop:variational_U}
Let $A,\pre\in\SPD$.
\begin{enumerate}
\item[\textbf{(i)}] There is a unique $U\in\calU_n$ such that $S_U\succ0$ and
$S_U^{-1}(I_n+UA)$ is lower triangular. It is obtained as
$C=\bigl(\begin{smallmatrix}0&U\\U^\top&0\end{smallmatrix}\bigr)$, the
$\calS$-component of the constrained decomposition of $B^{-1}$ furnished by
Theorem~\ref{thm:general_decomposition}, with $B$ as in~\eqref{eq:B_def}.
\item[\textbf{(ii)}] $(A,\pre)$ is triangularly regular if and only if
$I_n+UA$ is invertible for this $U$. In that case $U$ is the matrix of the
decomposition~\eqref{eq:twisted_decomposition}. In particular the
decomposition's $U$ satisfies $S_U\succ0$.
\end{enumerate}
\end{proposition}

\begin{proof}
\textbf{(i)} Set $\cov:=\pre^{-1}$ and consider the $2n\times 2n$ positive
definite matrix
\begin{equation}
\label{eq:B_def}
B:=
\begin{pmatrix}
\cov & \cov\\
\cov & \cov+A
\end{pmatrix},
\qquad
B^{-1}
=
\begin{pmatrix}
\pre+A^{-1} & -A^{-1}\\
-A^{-1} & A^{-1}
\end{pmatrix},
\end{equation}
together with the subspace
\[
\calS=
\left\{
\begin{pmatrix}
0 & U\\
U^\top & 0
\end{pmatrix}
:U\in\calU_n
\right\}\subset\calM_{2n}(\R)
\]
and its orthogonal complement (with respect to the trace inner product)
in the symmetric matrices,
\[
\calS^{\perp}
=
\left\{
\begin{pmatrix}
V & L\\
L^\top & W
\end{pmatrix}
:V,W \text{ symmetric},\, L\in\calL_n
\right\}.
\]
Every matrix in $\calS$ has zero diagonal, so as in the proof of
Proposition~\ref{thm:finance_value} a positive semidefinite $X\in\calS$ satisfies
$|X_{ij}|^2\le X_{ii}X_{jj}=0$ and hence $X=0$. Thus
$\calS\cap\mathbb S_+^{2n}=\{0\}$ and Theorem~\ref{thm:general_decomposition}
applies. It therefore supplies a unique pair
$(C,Q)\in\calS\times\calS^{\perp}$ such that $Q$ is positive definite
and $B^{-1}=Q^{-1}+C$. Writing
$C=\bigl(\begin{smallmatrix}0 & U\\ U^\top & 0\end{smallmatrix}\bigr)$, we compute
\[
B^{-1}-C
=
\begin{pmatrix}
\pre+A^{-1} & -A^{-1}-U\\
-A^{-1}-U^\top & A^{-1}
\end{pmatrix}.
\]
The Schur complement of the $(2,2)$-block $A^{-1}$ in $B^{-1}-C$ is
\begin{align*}
(\pre+A^{-1}) &- (-A^{-1}-U)\,A\,(-A^{-1}-U^\top)  \\
&=\pre+A^{-1}-(I_n+UA)(A^{-1}+U^\top)\\
&=\pre-U-U^\top-UAU^\top\;=\;S_U.
\end{align*}
The standard block-inverse formula then gives, for $Q=(B^{-1}-C)^{-1}$,
\[
Q_{11}=S_U^{-1},
\qquad
Q_{12}=-S_U^{-1}(-A^{-1}-U)\,A=S_U^{-1}(I_n+UA).
\]
Since $Q\in\calS^{\perp}$, the block $Q_{12}$ is lower triangular, and $Q\succ0$
gives $S_U=Q_{11}^{-1}\succ0$. This proves existence.

For uniqueness, let $U\in\calU_n$ be \emph{any} matrix with $S_U\succ0$ and
$S_U^{-1}(I_n+UA)$ lower triangular, and set
$C=\bigl(\begin{smallmatrix}0 & U\\ U^\top & 0\end{smallmatrix}\bigr)\in\calS$.
The computation above shows that $B^{-1}-C$ has $(2,2)$-block $A^{-1}\succ0$ and
Schur complement $S_U\succ0$, so $B^{-1}-C\succ0$. Set $Q:=(B^{-1}-C)^{-1}\succ0$.
Its blocks are again $Q_{11}=S_U^{-1}$ and $Q_{12}=S_U^{-1}(I_n+UA)$, the latter
lower triangular by hypothesis, so $Q\in\calS^{\perp}$. Hence $(C,Q)$ is a
constrained decomposition of $B^{-1}$, and the uniqueness in
Theorem~\ref{thm:general_decomposition} forces it to be the one constructed
above. Therefore $U$ is unique.

\textbf{(ii)} For any $(U',L')\in\calU_n\times\calL_n$ satisfying
$\pre=L'+U'+U'AL'$, a direct computation gives
\begin{equation}
\label{eq:S_U_factor}
S_{U'}=(I_n+U'A)(L'-U'^\top).
\end{equation}
($\Leftarrow$) If $I_n+UA$ is invertible for the $U$ of part~\textbf{(i)}, set
$L:=(I_n+UA)^{-1}(\pre-U)$. Then $L-U^\top=(I_n+UA)^{-1}S_U$, and since
$S_U^{-1}(I_n+UA)$ is lower triangular so is its inverse $L-U^\top$. Adding
$U^\top$ yields $L\in\calL_n$, so $(U,L)$
solves~\eqref{eq:twisted_decomposition} and $(A,\pre)$ is regular.

($\Rightarrow$) If $(A,\pre)$ is regular, then by
Theorem~\ref{thm:triangular_decomposition} it has a unique decomposition
$(U',L')$. By Lemma~\ref{lem:regularity},
$I_n+U'A$ is invertible. We claim $S_{U'}\succ0$. Granting this,
\eqref{eq:S_U_factor} gives $S_{U'}^{-1}(I_n+U'A)=(L'-U'^\top)^{-1}$, which is
lower triangular, so $U'$ satisfies the two conditions of part~\textbf{(i)} and
hence $U'=U$, making $I_n+UA$ invertible.

It remains to prove $S_{U'}\succ0$. We argue by induction on $n$, the case
$n=1$ being $S_{U'}=\pre_{11}>0$. Let $J=\{2,\dots,n\}$. By the
decoupling~\eqref{eq:trailing_block}, the trailing $J\times J$ block of
$S_{U'}$ is the matrix $S_{U'_{J,J}}$ formed from the positive definite pair
$(A_{J,J},\pre_{J,J})$, positive definite by the inductive hypothesis. By the
bordered-matrix criterion it suffices to show the Schur complement of this
block is positive. A direct computation using $\pre=L'+U'+U'AL'$ gives it as
\[
\pre_{11}-\pre_{1,J}\,(I_{n-1}+A_{J,J}\pre_{J,J})^{-1}A_{J,J}\,\pre_{J,1}.
\]
Since
$(I_{n-1}+A_{J,J}\pre_{J,J})^{-1}A_{J,J}
=\pre_{J,J}^{-1}-(\pre_{J,J}+\pre_{J,J}A_{J,J}\pre_{J,J})^{-1}\preceq\pre_{J,J}^{-1}$
(the subtracted matrix being positive definite), this is at least
$\pre_{11}-\pre_{1,J}\pre_{J,J}^{-1}\pre_{J,1}$, the Schur complement of
$\pre_{J,J}$ in the positive definite matrix $\pre$, which is positive. Hence
$S_{U'}\succ0$.
\end{proof}

\begin{example}
[The $2\times2$ case]
\label{exa:2x2}
Write $A=\bigl(\begin{smallmatrix}a&b\\b&c\end{smallmatrix}\bigr)$ and
$\pre=\bigl(\begin{smallmatrix}p&q\\q&r\end{smallmatrix}\bigr)$, and
$U=\bigl(\begin{smallmatrix}0 & u\\ 0 & 0\end{smallmatrix}\bigr)$. Running
Algorithm~\ref{alg:backward_elimination}: $L_{22}=r$, then $M_0=1+cr>0$ and
$u=q/(1+cr)$, so
\[
U=\begin{pmatrix}0 & q/(1+cr)\\ 0 & 0\end{pmatrix}.
\]
The pivot is $\pi_1=1+ub=(1+cr+bq)/(1+cr)$, which equals
$\det(I_n+UA)$. By Theorem~\ref{thm:triangular_decomposition}\textbf{(ii)} the
bilinear triangular decomposition exists if and only if
\[
1+cr+bq\neq 0,
\]
in which case
\[
L=\begin{pmatrix}\dfrac{(1+cr)p-cq^2}{1+cr+bq} & 0\\[6pt] q & r\end{pmatrix}.
\]

For $A=\bigl(\begin{smallmatrix}2 & -1\\ -1 & 1\end{smallmatrix}\bigr)$ and
$\pre=\bigl(\begin{smallmatrix}5 & 2\\ 2 & 1\end{smallmatrix}\bigr)$
(both positive definite with $\det A=\det\pre=1$), $1+cr+bq=1+1-2=0$, so the
bilinear triangular decomposition does not exist. The variational matrix of
Proposition~\ref{prop:variational_U} does: with
$U=\bigl(\begin{smallmatrix}0&1\\0&0\end{smallmatrix}\bigr)$,
$S_U=\bigl(\begin{smallmatrix}4&1\\1&1\end{smallmatrix}\bigr)\succ0$ and
$S_U^{-1}(I_n+UA)=\bigl(\begin{smallmatrix}0&0\\0&1\end{smallmatrix}\bigr)$ is
lower triangular.
\end{example}

\begin{remark}[The role of $A$]
\label{rem:good_A}
If $A$ is diagonal, then $(A,\pre)$ is triangularly regular for every positive
definite $\pre$: the product $UA$ is then strictly upper triangular, so
$I_n+UA$ is unipotent upper triangular and hence invertible, and
Proposition~\ref{prop:variational_U}\textbf{(ii)} applies. For non-diagonal $A$
regularity may fail, as the explicit $2\times2$ instance of
Example~\ref{exa:2x2} shows.
\end{remark}

\section{Conclusion}
\label{sec:conclusion}

In this work, we have shown that every positive definite $A$ admits a unique
decomposition precisely when the constraint subspace contains no nonzero
positive semidefinite matrix.

The decomposition is characterized as the unique minimizer of a single strictly
convex log-determinant functional. The same functional yields the dual
formulation of Section~\ref{subsec:duality_general} and the perturbation bounds
of Section~\ref{subsec:stability}, and the algorithms of
Section~\ref{sec:algorithms} minimize it directly, so a single object carries
both the theory and its computation.

As shown in Section~\ref{subsec:complexity_brief}, the algebraic structure of
the instance determines the computational cost, with several special cases, such
as group-invariant structure or a small primal or dual dimension, yielding
faster optimization. Instances in which the structure is retained by
$M(x)=A-C(x)$ at every iterate, such as banded and circulant matrices, are
particularly favorable.

One of the two applications developed here comes from mathematical finance and
the other is purely algebraic, which suggests that the same pattern occurs
elsewhere. While we have treated the log-determinant problem as a tool for a
linear-algebraic decomposition, the equivalent covariance estimation problem may
be of interest in its own right: the dual is the constrained Gaussian
maximum-likelihood problem for a precision matrix, so for a prescribed sparsity
or symmetry pattern the decomposition is the estimator, and the symmetry
inheritance of Section~\ref{sec:properties} and the structured costs of
Section~\ref{subsec:complexity_brief} apply to it unchanged.
Within mathematical finance, the exponential-utility problem of
Section~\ref{sec:finance} is one member of a larger family, and other Gaussian
hedging problems with information constraints should reduce to the same
decomposition. In linear algebra, the mechanism of
Section~\ref{sec:linear_algebra}, where an identity was obtained by applying the
theorem to an enlarged matrix with a suitable subspace, should yield further
identities.

Several directions seem natural. The theory should extend from real symmetric
matrices to complex Hermitian ones, and further to positive operators on
infinite-dimensional spaces. Replacing the subspace $\Sub$ by a general convex
constraint set would replace orthogonality by a normal-cone condition, though
existence and smooth dependence would then require different arguments. The
stability of the decomposition deserves a sharper description, both in the
regimes where $\kappa(B^{\ast},\Sub)$ or $\|B^{\ast}\|_2$ becomes large and
under perturbation of $\Sub$ itself, where a suitable distance from the
subspaces that violate the nondegeneracy condition would be the natural
parameter, pointing to regularization strategies for poorly conditioned
instances. Finally, in statistical applications $A$ is estimated from data,
and combining the perturbation bounds developed here with finite-sample
bounds for an estimator of $A$ would yield statistical guarantees for the
resulting decomposition.

\section*{Acknowledgments}

We would like to thank the referees for their careful reading of the manuscript and for their insightful comments and suggestions, which have significantly improved the quality and clarity of the paper.

\bibliographystyle{siamplain}
\bibliography{convex_bib}

@book{rue2005gaussian,
  title={Gaussian Markov Random Fields: Theory and Applications},
  author={Rue, H{\aa}vard and Held, Leonhard},
  series={Chapman \& Hall/CRC Monographs on Statistics and Applied Probability},
  publisher={Chapman and Hall/CRC},
  address={Boca Raton, FL},
  year={2005}
}

@article{sturmfels2009multivariate,
  title={Multivariate Gaussians, Semidefinite Matrix Completion, and Convex Algebraic Geometry},
  author={Sturmfels, Bernd and Uhler, Caroline},
  journal={Annals of Statistics},
  volume={38},
  number={2},
  pages={603--638},
  year={2010}
}

@article{zwiernik2014linear,
  title={Maximum Likelihood Estimation for Linear Gaussian Covariance Models},
  author={Zwiernik, Piotr and Uhler, Caroline and Richards, Donald St. P.},
  journal={Annals of Statistics},
  volume={42},
  number={5},
  pages={1912--1936},
  year={2014}
}

@article{uhler2012geometry,
  title={Geometry of Maximum Likelihood Estimation in Gaussian Graphical Models},
  author={Uhler, Caroline},
  journal={Annals of Statistics},
  volume={40},
  number={1},
  pages={238--261},
  year={2012}
}

@article{dhillon2008matrix,
  title={Matrix nearness problems with {Bregman} divergences},
  author={Dhillon, Inderjit S. and Tropp, Joel A.},
  journal={SIAM Journal on Matrix Analysis and Applications},
  volume={29},
  number={4},
  pages={1120--1146},
  year={2008}
}

@article{vandenberghe1998determinant,
  title={Determinant maximization with linear matrix inequality constraints},
  author={Vandenberghe, Lieven and Boyd, Stephen and Wu, Shao-Po},
  journal={SIAM Journal on Matrix Analysis and Applications},
  volume={19},
  number={2},
  pages={499--533},
  year={1998}
}

@book{nesterov1994interior,
  title={Interior-Point Polynomial Algorithms in Convex Programming},
  author={Nesterov, Yurii and Nemirovskii, Arkadii},
  volume={13},
  year={1994},
  publisher={SIAM}
}

@article{dempster1972covariance,
  title={Covariance selection},
  author={Dempster, Arthur P.},
  journal={Biometrics},
  volume={28},
  number={1},
  pages={157--175},
  year={1972}
}

@book{nocedal2006numerical,
  title={Numerical Optimization},
  author={Nocedal, Jorge and Wright, Stephen J.},
  edition={2nd},
  year={2006},
  publisher={Springer Science \& Business Media}
}

@article{dembo1982inexact,
  title={Inexact {N}ewton methods},
  author={Dembo, Ron S. and Eisenstat, Stanley C. and Steihaug, Trond},
  journal={SIAM Journal on Numerical Analysis},
  volume={19},
  number={2},
  pages={400--408},
  year={1982}
}

@book{boyd2004convex,
  title={Convex Optimization},
  author={Boyd, Stephen and Vandenberghe, Lieven},
  year={2004},
  publisher={Cambridge University Press}
}

@book{bhatia2013matrix,
  title={Matrix Analysis},
  author={Bhatia, Rajendra},
  year={2013},
  publisher={Springer Science \& Business Media}
}

@book{amari2016information,
  title={Information Geometry and Its Applications},
  author={Amari, Shun-ichi},
  year={2016},
  publisher={Springer}
}

@book{follmer2016stochastic,
  title={Stochastic Finance: An Introduction in Discrete Time},
  author={F{\"o}llmer, Hans and Schied, Alexander},
  year={2016},
  edition={4th},
  publisher={De Gruyter}
}

@book{karatzas1998methods,
  title={Methods of Mathematical Finance},
  author={Karatzas, Ioannis and Shreve, Steven E.},
  year={1998},
  publisher={Springer Science \& Business Media}
}

@article{schachermayer2004utility,
  title={Utility maximisation in incomplete markets},
  author={Schachermayer, Walter},
  journal={Stochastic Methods in Finance},
  pages={255--293},
  year={2004},
  publisher={Springer}
}

@article{merton1969lifetime,
  title={Lifetime portfolio selection under uncertainty: The continuous-time case},
  author={Merton, Robert C.},
  journal={The Review of Economics and Statistics},
  pages={247--257},
  volume={51},
  number={3},
  year={1969}
}

@book{eberlein2019mathematical,
  title={Mathematical Finance},
  author={Eberlein, Ernst and Kallsen, Jan},
  year={2019},
  publisher={Springer}
}

@book{cover2006elements,
  title={Elements of Information Theory},
  author={Cover, Thomas M. and Thomas, Joy A.},
  edition={2nd},
  year={2006},
  publisher={John Wiley \& Sons}
}

@article{gray2006toeplitz,
  title={Toeplitz and circulant matrices: A review},
  author={Gray, Robert M.},
  journal={Foundations and Trends{\textregistered} in Communications and Information Theory},
  volume={2},
  number={3},
  pages={155--239},
  year={2006}
}

@article{gohberg1994fast,
  title   = {Fast Gaussian elimination with partial pivoting for matrices with displacement structure},
  author  = {Gohberg, Israel and Kailath, Thomas and Olshevsky, Vadim},
  journal = {Mathematics of Computation},
  volume  = {64},
  number  = {212},
  pages   = {1557--1576},
  year    = {1995}
}

@article{erisman1975computing,
  title   = {On computing certain elements of the inverse of a sparse matrix},
  author  = {Erisman, A. M. and Tinney, W. F.},
  journal = {Communications of the ACM},
  volume  = {18},
  number  = {3},
  pages   = {177--179},
  year    = {1975}
}

@book{vandebril2008matrix,
  title     = {Matrix Computations and Semiseparable Matrices, Volume I: Linear Systems},
  author    = {Vandebril, Raf and Van Barel, Marc and Mastronardi, Nicola},
  publisher = {Johns Hopkins University Press},
  address   = {Baltimore, MD},
  year      = {2008}
}

\appendix


\section{Proofs of the perturbation bounds}
\label{app:perturbation}

\begin{proof}[Proof of Proposition~\ref{prop:perturbation_fro}]
Let $A_t:=A+t\Delta=(1-t)A+t\widetilde A$ for $t\in[0,1]$. Both $A$ and
$\widetilde A$ lie in $\SPD$, which is convex, so $A_t\in\SPD$, and
Theorem~\ref{thm:general_decomposition} supplies a decomposition $(B_t,C_t)$ of
$A_t$; thus $(B_0,C_0)=(B^{*},C^{*})$ and
$(B_1,C_1)=(\widetilde B,\widetilde C)$. Consider the optimality map
$F(A,C):=\proj_\Sub\big((A-C)^{-1}\big)\in \Sub$, where $\proj_\Sub$ is the
orthogonal projection of $\Sym$ onto $\Sub$, so
that $F(A_t,C_t)=0$ is the first-order condition
$\tr\bigl((A_t-C_t)^{-1}X\bigr)=0$ for all $X\in \Sub$.

\emph{Step 1 (differentiate along the path).} Since
$\frac{d}{ds}\big|_{s=0}(M+sE)^{-1}=-M^{-1}EM^{-1}$, differentiating $F$ in
each argument gives, for $(\Delta,X)\in\Sym\times \Sub$,
\begin{align}
\frac{d}{ds}\Big|_{s=0}F(A_t,C_t+sX)&=\proj_\Sub(B_tXB_t),\notag\\
\label{eq:dF}
\frac{d}{ds}\Big|_{s=0}F(A_t+s\Delta,C_t)&=-\proj_\Sub(B_t\Delta B_t).
\end{align}
By the definition of $\alpha$ and Lemma~\ref{lem:alpha_lipschitz}\textbf{(i)},
every $X\in \Sub\setminus\{0\}$ satisfies
\[
\langle\proj_\Sub(B_tXB_t),X\rangle_F=\tr(B_tXB_tX)\ge\alpha(B_t,\Sub)\|X\|_F^2>0 ,
\]
so the linear map $X\mapsto\proj_\Sub(B_tXB_t)$ is injective on the
finite-dimensional space $\Sub$, hence bijective.
Matrix inversion is real analytic on the invertible matrices, so $F$ is real
analytic, and with this invertibility the analytic implicit function theorem
applies: near every $A_t$ it
produces a real-analytic map solving $F(A,\cdot)=0$, which by the uniqueness in
Theorem~\ref{thm:general_decomposition} is $A\mapsto C^{*}(A)$. Hence
$t\mapsto C_t$ is real analytic, as is $t\mapsto B_t=(A_t-C_t)^{-1}$. Write
$\dot C_t:=\frac{d}{dt}C_t$ and $\dot B_t:=\frac{d}{dt}B_t$. Since every $C_t$
lies in the subspace $\Sub$, so does $\dot C_t$. By the chain rule
and~\eqref{eq:dF}, differentiating $F(A_t,C_t)=0$ in $t$ gives
\be
\label{eq:dC_eq}
\proj_\Sub\big(B_t\,\dot C_t\,B_t\big)=\proj_\Sub(B_t\Delta B_t).
\ee

\emph{Step 2 (the derivative is a projection).} For $B\in\SPD$ the formula
\be
\label{eq:B_inner}
\langle X,Y\rangle_B:=\tr(BXBY),\qquad \|X\|_B:=\sqrt{\tr(BXBX)} ,
\ee
defines an inner product on $\Sym$. It is bilinear and symmetric, and
$\langle X,X\rangle_B=\|B^{1/2}XB^{1/2}\|_F^2$ vanishes only for $X=0$, since
$B^{1/2}$ is invertible. Pairing~\eqref{eq:dC_eq} with $X\in \Sub$ and using
$\langle\proj_\Sub(Y),X\rangle_F=\langle Y,X\rangle_F$ for $Y\in\Sym$ shows that
$\langle \dot C_t,X\rangle_{B_t}=\langle\Delta,X\rangle_{B_t}$ for every
$X\in \Sub$. Since $\dot C_t$ itself lies in $\Sub$, this identifies $\dot C_t$
as the orthogonal projection of $\Delta$ onto $\Sub$ in the inner
product~\eqref{eq:B_inner}. Set $R_t:=\Delta-\dot C_t$. Orthogonality gives
$\|\Delta\|_{B_t}^2=\|\dot C_t\|_{B_t}^2+\|R_t\|_{B_t}^2$, so
$\|\dot C_t\|_{B_t}\le\|\Delta\|_{B_t}$ and $\|R_t\|_{B_t}\le\|\Delta\|_{B_t}$.
By~\eqref{eq:alpha_def_fro}, $\sqrt{\alpha(B_t,\Sub)}\|X\|_F\le\|X\|_{B_t}$ for
$X\in \Sub$, while $\|\Delta\|_{B_t}=\|B_t^{1/2}\Delta
B_t^{1/2}\|_F\le\|B_t\|_2\,\delta$, where $\delta:=\|\Delta\|_F$. Combining,
\be
\label{eq:dC_bound}
\|\dot C_t\|_F\le\frac{\|B_t\|_2}{\sqrt{\alpha(B_t,\Sub)}}\,\delta
=\kappa(B_t,\Sub)\,\delta .
\ee
Differentiating $B_t=(A_t-C_t)^{-1}$ gives $\dot B_t=-B_tR_tB_t$, and writing
$W:=B_t^{1/2}R_tB_t^{1/2}$, so that $B_tR_tB_t=B_t^{1/2}WB_t^{1/2}$, yields
\begin{align}
\|\dot B_t\|_2&\le\|B_t\|_2\|W\|_2\le\|B_t\|_2\|W\|_F
=\|B_t\|_2\|R_t\|_{B_t}\notag\\
\label{eq:dB_bound}
&\le\|B_t\|_2\|\Delta\|_{B_t}\le\|B_t\|_2^2\,\delta .
\end{align}

\emph{Step 3 (integrate).} Recall from
Lemma~\ref{lem:alpha_lipschitz}\textbf{(i)} that
$\alpha(B^{*},\Sub)\le\|B^{*}\|_2^2$, equivalently $\kappa(B^{*},\Sub)\ge1$. The
coefficients in~\eqref{eq:dC_bound} and~\eqref{eq:dB_bound} are evaluated at the
intermediate decomposition $B_t$, which is unknown, whereas the statement
involves only $B_0=B^{*}$. We therefore first bound $\|B_t\|_2$ in terms of
$\|B^{*}\|_2$.

Since $t\mapsto B_t$ is continuously differentiable on the compact interval
$[0,1]$, the map
$t\mapsto\|B_t\|_2$ is Lipschitz and hence absolutely continuous, and
by~\eqref{eq:dB_bound} it satisfies
$\frac{d}{dt}\|B_t\|_2\le\|\dot B_t\|_2\le\delta\,\|B_t\|_2^2$ for almost every
$t$. Therefore, by the chain rule, $\frac{d}{dt}\|B_t\|_2^{-1}\ge-\delta$
almost everywhere, and integrating from $0$ gives
$\|B_t\|_2^{-1}\ge\|B^{*}\|_2^{-1}-\delta t$. Hence, whenever
$\|B^{*}\|_2\,\delta<1$,
\be
\label{eq:B_comparison}
\|B_t\|_2\le\frac{\|B^{*}\|_2}{1-\|B^{*}\|_2\,\delta t},\qquad 0\le t\le 1.
\ee
By~\eqref{eq:smallness_fro} and $\kappa(B^{*},\Sub)\ge1$ we have
$\|B^{*}\|_2\,\delta=\varepsilon/\kappa(B^{*},\Sub)^2\le\varepsilon$.
Therefore~\eqref{eq:B_comparison} gives
$\|B_t\|_2\le\|B^{*}\|_2/(1-\varepsilon)$ for all $t\in[0,1]$, and
\begin{align*}
\|B_t-B^{*}\|_2&\le\int_0^t\delta\,\|B_s\|_2^2\,ds
\le\int_0^t\frac{\delta\,\|B^{*}\|_2^2}{\big(1-\|B^{*}\|_2\,\delta s\big)^2}\,ds\\
&=\frac{\|B^{*}\|_2^2\,\delta t}{1-\|B^{*}\|_2\,\delta t}
\le\frac{\|B^{*}\|_2^2\,\delta}{1-\|B^{*}\|_2\,\delta}\le\frac{\|B^{*}\|_2^2\,\delta}{1-\varepsilon} ,
\end{align*}
At $t=1$ the first bound is~\eqref{eq:B_stab_fro}.
Lemma~\ref{lem:alpha_lipschitz}\textbf{(ii)} bounds
the drift of $\alpha$, and~\eqref{eq:smallness_fro} in the form
$\|B^{*}\|_2^3\,\delta=\varepsilon\,\alpha(B^{*},\Sub)$ turns it into a fraction
of $\alpha(B^{*},\Sub)$,
\[
\big|\alpha(B_t,\Sub)-\alpha(B^{*},\Sub)\big|
\le\big(\|B_t\|_2+\|B^{*}\|_2\big)\|B_t-B^{*}\|_2
\le\frac{\varepsilon(2-\varepsilon)}{(1-\varepsilon)^2}\,\alpha(B^{*},\Sub) .
\]
Since $(1-\varepsilon)^2-\varepsilon(2-\varepsilon)=1-4\varepsilon+2\varepsilon^2$,
positive by the hypothesis on $\varepsilon$, this gives
$\alpha(B_t,\Sub)\ge(1-4\varepsilon+2\varepsilon^{2})(1-\varepsilon)^{-2}
\alpha(B^{*},\Sub)$ throughout. The factors $1-\varepsilon$ therefore cancel in
\[
\frac{\|B_t\|_2}{\sqrt{\alpha(B_t,\Sub)}}
\le\frac{\kappa(B^{*},\Sub)}{\sqrt{1-4\varepsilon+2\varepsilon^{2}}},
\]
and integrating~\eqref{eq:dC_bound} over $[0,1]$
yields~\eqref{eq:C_stab_fro}.
\end{proof}

\begin{proof}[Proof of Corollary~\ref{cor:perturbation_global}]
The maximum and minimum defining $\beta_0$ and $\alpha_0$ are attained because
$\Theta$ is compact, $A\mapsto B^{*}(A)$ is continuous, and $\alpha(\cdot,\Sub)$ is
continuous by Lemma~\ref{lem:alpha_lipschitz}\textbf{(ii)}. Moreover $\alpha_0>0$ by
Lemma~\ref{lem:alpha_lipschitz}\textbf{(i)}. For $A,\widetilde A\in\Theta$, the segment
$A_t=A+t(\widetilde A-A)$, $t\in[0,1]$, lies in $\Theta$ by convexity, and each
$A_t$ has a unique decomposition with $\|B_t\|_2\le\beta_0$ and
$\alpha(B_t,\Sub)\ge\alpha_0$, uniformly along the segment, which is what lets
the two bounds be integrated as they stand.
Integrating~\eqref{eq:dC_bound} gives
\[
\|C^{*}(\widetilde A)-C^{*}(A)\|_F\le\int_0^1\frac{\|B_t\|_2}{\sqrt{\alpha(B_t,\Sub)}}\|\widetilde A-A\|_F\,dt
\le\frac{\beta_0}{\sqrt{\alpha_0}}\|\widetilde A-A\|_F,
\]
and integrating~\eqref{eq:dB_bound} gives
$\|B^{*}(\widetilde A)-B^{*}(A)\|_2\le \beta_0^2\|\widetilde A-A\|_F$.
\end{proof}

\section{Algebraic examples and numerical illustrations}
\label{app:examples}
\label{subsec:algebraic_examples}

We illustrate the framework with six concrete examples, representing different
computational structural settings. The per-iteration cost of each is the corresponding row of Table~\ref{tab:per_iteration_complexity}.
\begin{enumerate}
\renewcommand{\labelenumi}{\Roman{enumi}.}
\item (dense, small $\Sub$, exact Newton): $A$ is a dense SPD matrix and $\Sub$ is spanned by $m$ symmetric matrices $\{D_1,\ldots,D_m\}$ with $m$ small.
The exact Newton method operates directly on the $m$-dimensional coordinate
 vector, each iteration dominated by a Cholesky factorization.

 \item (dense, small $\Sub$, Newton-CG): the setting of Example~I solved by
 Newton-CG rather than exact Newton.

 \item (dense, tridiagonal $\Sub^\perp$, dual Newton-CG): $A$ is dense SPD and $\Sub$ is
 high-dimensional but its orthogonal complement $\Sub^\perp$ has small dimension
 $m^\perp$, here taken to be tridiagonal (half-bandwidth $b=1$), so that $B^\ast$ is
 tridiagonal. The dual Newton method optimizes over the $m^\perp$ coordinates,
 while $M(x)$ stays dense.

 \item ($\calG$-invariant, $r$ blocks): $A$ is invariant under a group $\calG$
 of block permutations acting on $r$ blocks, and $\Sub$ is $\calG$-invariant with
 $C$ constrained to have zero diagonal.
 The problem reduces to the fixed-point subspace $\Sub^{\calG}$ of dimension
 $m_{\calG} \le r(r+1)/2$, and the per-iteration cost becomes independent of $n$.

 \item (banded): both $A$ and $\Sub$ have half-bandwidth $b \ll n$, so $M(x)$ is
 banded along the whole optimization path, a banded Cholesky factorization
 suffices at every iterate, and the linear algebra itself drops to $O(nb^2)$. A band-storage variant, reported as the
 indented row of Table~\ref{tab:scaling_summary}, stores only the band and so
 reaches much larger $n$.

 \item (circulant): $A$ and every $D_k$ are circulant and
 symmetric, so $M(x)$ stays circulant along the optimization path and is
 diagonalized by the discrete Fourier transform.
 \end{enumerate}

 \smallskip
Table~\ref{tab:scaling_summary} reports the measured per-iteration cost of
recovering the same pair $(B^\ast,C^\ast)$ in each setting, and
Figure~\ref{fig:scaling} its growth with $n$. What structure buys is best read
against the naive alternative for each setting, which differs from one to the
next. On the banded instance, a solver that keeps the banded factorization but
drops the structure-aware Hessian and trace evaluations is slower by a factor
of $6$ at $n=2{,}000$ and $20$ at $n=4{,}000$. Cost is reported per
{\it Newton iteration}, the quantity Table~\ref{tab:per_iteration_complexity} bounds.
For the Newton-CG settings such an iteration contains $N_{\mathrm{cg}}$
conjugate-gradient steps, a count that itself grows with $n$ in the dual
example. The dual setting is the most expensive in absolute terms
because it is the most demanding instance in the table: $M(x)$ is dense and the
dual dimension $m^\perp=2n-1$ grows with $n$, where the dense rows keep $m$
fixed. The dual formulation is what makes it tractable, since
$m\approx2\cdot10^6$ against $m^\perp=3{,}999$ at $n=2{,}000$, and
Table~\ref{tab:per_iteration_complexity} puts the primal cost on the same
instance roughly $330$ times higher.

\begin{table}[htbp]
\centering
\scriptsize
\renewcommand{\arraystretch}{1.15}%
\setlength{\tabcolsep}{4pt}%
\caption{Measured cost per Newton iteration across structural settings,
single-threaded, each entry the median over three random instances. The
per-iteration cost is separated from the one-time setup and output cost
by running each solve under two iteration caps and differencing. The circulant
entry is measured at $n=2{,}048$, the nearest power of two. The band-storage row has no
entry at $n=2{,}000$, being measured only at much larger $n$. The last two columns compare the fitted
$n$-exponent with the exponent of the bound in
Table~\ref{tab:per_iteration_complexity}. Every run in the $n=2{,}000$
column reaches a first-order residual below $10^{-8}$.}
\label{tab:scaling_summary}
\setlength{\tabcolsep}{1.5pt}
\begin{tabular}{@{}llrrrrrc@{}}
\toprule
& & \multicolumn{3}{c}{at $n=2{,}000$} & \multicolumn{3}{c}{over the swept range}\\
\cmidrule(lr){3-5}\cmidrule(lr){6-8}
Setting & Method
& dim & iters & sec/iter
& largest $n$ & fitted & theory\\
\midrule
Dense, small $\Sub$           & exact Newton    & $m=5$             & $2$  & $2.47$    & $3{,}000$   & $2.61$ & $3$ \\
Dense, small $\Sub$           & Newton-CG       & $m=5$             & $2$  & $5.86$    & $3{,}000$   & $2.83$ & $3$ \\
Dense, tridiag.\ $\Sub^\perp$ & dual Newton-CG  & $m^\perp=3{,}999$ & $15$ & $20.24$   & $3{,}000$   & $2.59$ & $3$ \\
$\calG$-invariant ($r=5$)     & Newton-CG       & $m_{\calG}=13$    & $5$  & $0.0034$  & $8{,}000$   & $0.00$ & $0$ \\
Banded ($b=2$)                & Newton-CG       & $m=3{,}997$       & $4$  & $0.181$   & $8{,}000$   & $1.02$ & $1$ \\
\quad\emph{band storage}      & Newton-CG       & $m=3{,}997$       & $4$  & ---       & $256{,}000$ & $1.04$ & $1$ \\
Circulant                     & Newton-CG       & $m=4$             & $6$  & $0.00027$ & $524{,}288$ & $1.08$ & $1$ \\
\bottomrule
\end{tabular}
\end{table}

\IfFileExists{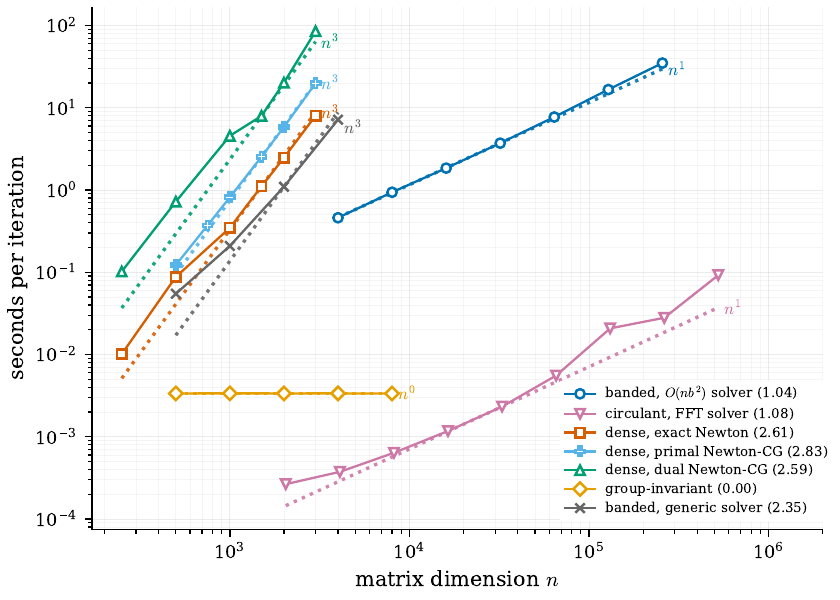}{%
\begin{figure}[htbp]
\centering
\includegraphics[width=0.80\textwidth]{fig_scaling_vs_n.pdf}
\caption{Per-iteration cost against matrix dimension $n$ (log-scale), for the
structural settings of Table~\ref{tab:per_iteration_complexity}. Solid lines are
measurements; the legend gives their fitted $n$-exponents.
Dotted lines show the $n$-exponent of the corresponding bound,
anchored to the measured curve. Those bounds also carry a factor
$N_{\mathrm{cg}}$, so the dotted lines are a reference slope for the linear
algebra alone.}
\label{fig:scaling}
\end{figure}}{}

\paragraph{Reproducibility}
All timings used a single core of a Linux compute cluster with $32$\,GB of
memory per job. The BLAS thread count is pinned to one before NumPy is
imported, so that the measured slopes reflect the algorithms alone. The
implementation is
Python~3.12 with NumPy, SciPy and pandas. Dense instances are
$A=MM^{\top}+2nI_n$ with $M$ having independent standard normal entries, banded
instances use the same random construction restricted to a band of half-width
$b=2$, and circulant instances a random symmetric first column supported on six
lags. The seeds are $0,1,2$.

The constraint dimension grows with $n$ as $m^{\perp}=2n-1$ in the dual setting
and $m=2n-3$ in the banded ones, and is fixed in the others. The iteration caps used for the differencing are $4$ and $12$
($2$ and $8$ for dense Newton-CG), with the convergence test disabled so that
exactly the requested number of iterations is taken. The fitted exponents are least-squares slopes of the logarithm
of the time per iteration against $\log n$, over the whole $n$ grid of each
setting. The residual quoted in Table~\ref{tab:scaling_summary} is the largest
absolute component of the gradient at the returned iterate, namely
$\max_k|\tr(BD_k)|$ in the primal settings and the corresponding dual gradient
for Example~II.
The driver scripts, the $n$ grids and the raw measurements are available in the
repository cited in Section~\ref{sec:optimization}.

Figure~\ref{fig:block_group_demo} illustrates the block-permutation invariant
setting of Example~IV on a small instance with $n=25$ (five blocks of size
five), chosen for visualization. It shows how symmetry reduces the effective
optimization dimension while enforcing exact structural constraints in both $C$
and $B$.

\begin{figure}[htbp]
\centering
\includegraphics[width=0.84\textwidth]{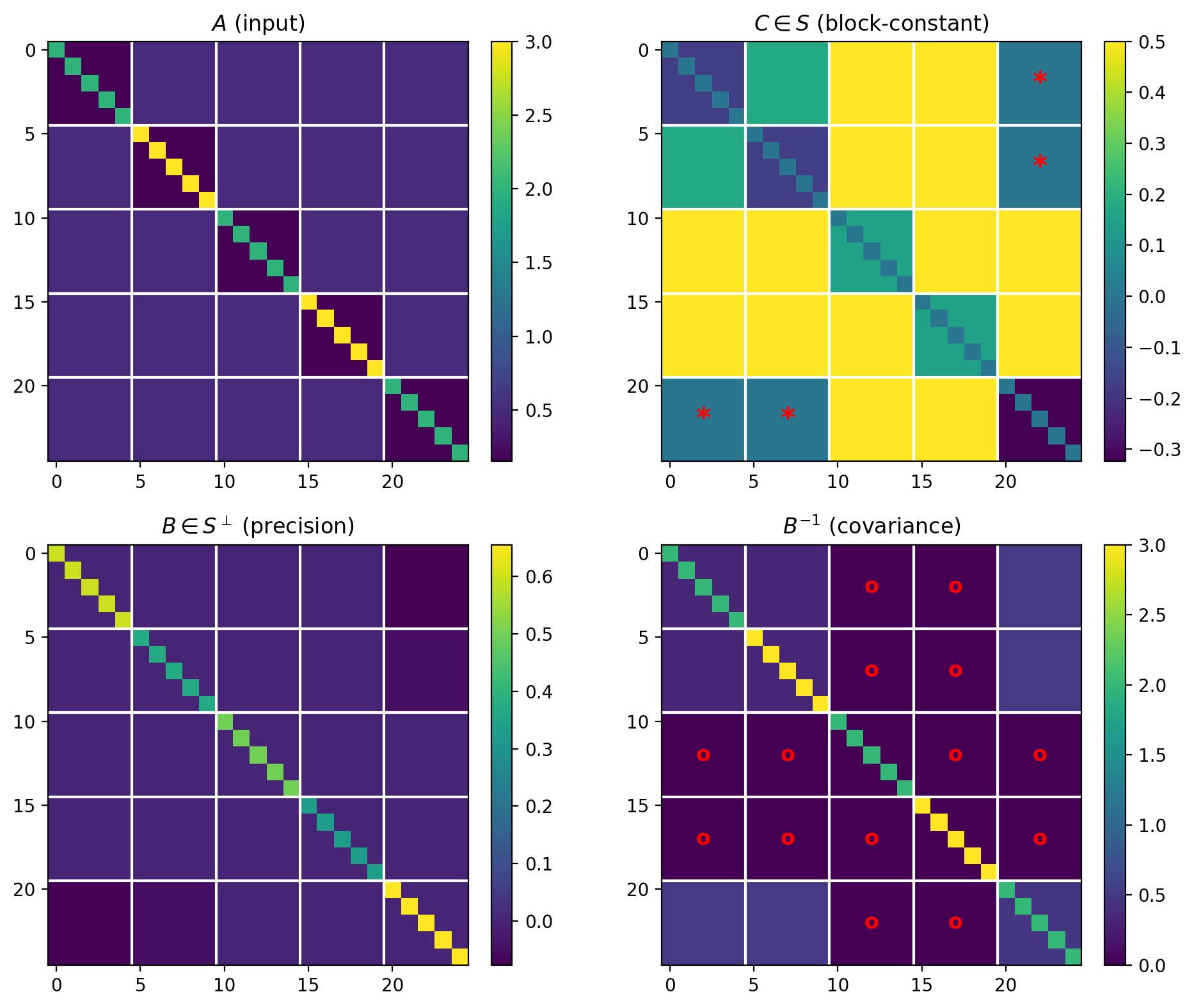}
\caption{Block-permutation invariant decomposition (Example~IV), with $n=25$ (five blocks of size five).
Top left: input matrix $A$.
Top right: constraint matrix $C\in \Sub$, block-constant on symmetric block pairs.
Red stars mark the block pairs excluded from $\Sub$, on which $C$ vanishes.
Bottom left: precision matrix $B\in \Sub^\perp$.
Bottom right: covariance matrix $B^{-1}$, with red circles on the block pairs where it vanishes exactly.}
\label{fig:block_group_demo}
\end{figure}

\clearpage
\newpage
\section{Newton-CG implementation details}
\label{app:newtoncg_pseudocode}

The Newton-CG routines of
Section~\ref{sec:optimization} are shown below: Algorithm~\ref{alg:newtoncg} runs the outer
Newton iteration with inner conjugate-gradient steps,
Algorithm~\ref{alg:evaluate_phi_grad} evaluates $\Phi$, its gradient,
and the inverse $B=M^{-1}$ of $M=A-C(x)$, and
Algorithm~\ref{alg:hv_product} computes the
Hessian-vector products~\eqref{eq:Hv_formula}. They are written for the dense
case; otherwise structure-exploiting solvers for $M$ may replace $B$.

\vspace{-0.2cm}
\begin{algorithm}[H]
\caption{\textproc{ConstrainedDecompositionNewtonCG}}
\label{alg:newtoncg}
\begin{algorithmic}[1]
\Require $A\in\SPD$, basis $\{D_k\}_{k=1}^m$ of $\Sub$
\Parameters tolerances $\texttt{tol},\texttt{tol\_cg}>0$
\Ensure The minimizer $x$, $C^\ast=C(x)$, and $B^\ast=(A-C^\ast)^{-1}$
\State $x\gets0$, \; $(\Phi,g,B)\gets\Call{EvaluatePhiGrad}{A,x}$
\While{$\|g\|_2>\texttt{tol}$}
 \State $d\gets0$, \; $r\gets-g$, \; $s\gets r$
 \While{$\|r\|_2>\min\{\texttt{tol\_cg},\|g\|_2\}\,\|g\|_2$}
  \State $q\gets\Call{HvProduct}{\{D_k\},B,s}$
  \State $\theta\gets\|r\|_2^2/(s^\top q)$, \; $d\gets d+\theta s$, \; $r^{+}\gets r-\theta q$
  \State $s\gets r^{+}+\bigl(\|r^{+}\|_2^2/\|r\|_2^2\bigr)s$, \; $r\gets r^{+}$
 \EndWhile
 \State Choose $t$ by feasibility-preserving Armijo backtracking, \; $x\gets x+td$
 \State $(\Phi,g,B)\gets\Call{EvaluatePhiGrad}{A,x}$
\EndWhile
\State $C^\ast\gets C(x)$, \; $B^\ast\gets B$
\State \Return $x$, $C^\ast$, $B^\ast$
\end{algorithmic}
\end{algorithm}



\vspace{-0.3cm}
\begin{algorithm}[H]
\caption{\textproc{EvaluatePhiGrad}}
\label{alg:evaluate_phi_grad}
\begin{algorithmic}[1]
\Require $A\in\SPD$, $x\in\Omega_x$
\Ensure $\Phi$, the gradient $g=\nabla\Phi(x)\in\R^m$, and $B=M^{-1}$
\State $C\gets\sum_{k=1}^m x_k D_k$, \; $M\gets A-C$
\State Factorize $M$, \; $\Phi\gets-\log\det(M)$, \; $B\gets M^{-1}$
\For{$k=1$ \textbf{to} $m$}
 \State $g_k\gets\tr(BD_k)$
\EndFor
\State \Return $\Phi$, $g$, $B$
\end{algorithmic}
\end{algorithm}



\vspace{-0.4cm}
\begin{algorithm}[H]
\caption{\textproc{HvProduct}}
\label{alg:hv_product}
\begin{algorithmic}[1]
\Require Basis $\{D_k\}_{k=1}^m$, $B\in\SPD$, a vector $p\in\R^m$
\Ensure $q=H(x)p\in\R^m$
\State $D(p)\gets\sum_{\ell=1}^m p_\ell D_\ell$
\State $Y\gets B\,D(p)\,B$
\For{$k=1$ \textbf{to} $m$}
 \State $q_k\gets\tr(D_kY)$
\EndFor
\State \Return $q$
\end{algorithmic}
\end{algorithm}

\end{document}